\numberwithin{equation}{section}
\newtheorem{theorem}{Theorem}[section]
\newtheorem{proposition}[theorem]{Proposition}
\newtheorem{corollary}[theorem]{Corollary}
\newtheorem{lemma}[theorem]{Lemma}
\newtheorem{problem}[theorem]{Problem}
\newtheorem{remark}[theorem]{Remark}
\theoremstyle{definition}
\newtheorem{defn}[theorem]{Definition}
\newcommand{\symm}{{\mathfrak{S}}}
\newcommand{\OOO}{{\mathcal{O}}}
\newcommand{\EEE}{{\mathcal{E}}}
\newcommand{\KKK}{{\mathcal{K}}}
\newcommand{\CC}{{\mathbb{C}}}
\newcommand{\ZZ}{{\mathbb{Z}}}
\newcommand{\PP}{{\mathbb{P}}}
\newcommand{\sell}{{\mathfrak{sl}}}
\newcommand{\vol}{{\mathrm{vol}}}
\begin{document}

\title[The combinatorics of supertorus sheaf cohomology]
{The combinatorics of supertorus sheaf cohomology}

\author{Jesse Kim}
\author{Jeffrey M. Rabin}
\author{Brendon Rhoades}
\address
{Department of Mathematics \newline \indent
University of California, San Diego \newline \indent
La Jolla, CA, 92093, USA}
\email{(jvkim, jrabin, bprhoades)@ucsd.edu}

\begin{abstract}
Affine superspace $\CC^{1 \mid n}$ has a single bosonic coordinate $z$ and $n$ fermionic coordinates $\theta_1, \dots, \theta_n$.
Let $M$ be the supertorus obtained by quotienting  $\CC^{1 \mid n}$ by the abelian group generated by the  maps 
$S: (z,\theta_1, \dots, \theta_n) \mapsto (z + 1, \theta_1, \dots, \theta_n)$ and
$T: (z, \theta_1, \dots, \theta_n) \mapsto (z + t, \theta_1 + \alpha_1, \dots, \theta_n + \alpha_n)$ where
$t \in \CC$ has positive imaginary part and $\alpha_1, \dots, \alpha_n$ are independent fermionic parameters.
We 
compute the zeroth and first cohomology groups of the structure sheaf $\OOO$ of $M$ as 
doubly graded $\symm_n$-modules,
exhibiting an instance of Serre duality between these groups.
We use skein relations and noncrossing matchings to give a combinatorial
 presentation of $H^0(M,\OOO)$ in terms of generators and relations.
\end{abstract}

\maketitle

\section{Introduction}
\label{Introduction}

In classical algebraic geometry, regular functions on complex varieties $X$ are represented by 
rational functions $f$  in commuting variables $x_1, x_2, \dots$ whose denominators have no zeros on $X$.
The prototypical model is affine $m$-space $\CC^m$ with coordinate ring $\CC[x_1, \dots, x_m]$.
In supergeometry, one considers functions of both commuting variables $x_1, x_2, \dots $ representing bosonic coordinates
as well as anticommuting variables $\theta_1, \theta_2, \dots$ representing fermionic coordinates. The coordinate ring of affine superspace $\CC^{m \mid n}$ 
is the tensor product $\CC[x_1, \dots, x_m] \otimes \wedge \{ \theta_1, \dots, \theta_n \}$ of a polynomial ring of rank $m$ and an exterior algebra of rank $n$.

Let $\CC^{1 \mid n}$ be affine superspace with one bosonic coordinate $z$ and $n$ fermionic coordinates $\theta_1, \dots, \theta_n$.
We consider a free exterior algebra $\wedge \{ \alpha_1, \dots, \alpha_n \}$ 
generated by ``coefficient" fermionic variables $\alpha_1, \dots, \alpha_n$.  In supergeometry, elements of $\wedge \{ \alpha_1, \dots, \alpha_n \}$ of odd homogeneous degree
may be substituted for the $\theta_i$ in any regular functions $f(z,\theta_1, \dots, \theta_n)$ on $\CC^{1 \mid n}$; this is the anticommutative analogue of evaluating 
a polynomial $\CC[x_1, \dots, x_m ]$ at a point in $\CC^m$.

Let $t \in \CC$ be a complex number with positive imaginary part.  We consider two translation operators $S,T: \CC^{1 \mid n} \rightarrow \CC^{1 \mid n}$ defined in terms
of coordinates by
\begin{align}
\label{Stranslation} 
S:& (z, \theta_1, \dots, \theta_n) \mapsto (z+1, \theta_1, \dots, \theta_n), \\
\label{Ttranslation} 
T:& (z, \theta_1, \dots, \theta_n) \mapsto (z + t, \theta_1 + \alpha_1, \dots, \theta_n + \alpha_n).
\end{align}
We let $M$ be the quotient of $\CC^{1 \mid n}$ by the group of operators generated by $S$ and $T$.
If $n = 0$ and no fermionic variables were present, the space $M$ would be a copy of the usual torus $S^1 \times S^1$
with complex structure given by the modulus $t$. 
As such, we may regard $M$ as a superspace extension of the torus (technically, a family of supertori over the parameter space $\wedge \{ \alpha_1, \dots, \alpha_n \}$).

Let $\OOO$ be the structure sheaf of $M$, so that the zeroth cohomology $H^0(M,\OOO)$  is the algebra of globally defined regular functions on $M$ 
(see Definition~\ref{h0-definition} for a precise definition of $H^0(M,\OOO)$).
If $n = 0$, the space $H^0(M,\OOO)$ is merely a copy of the ground field $\CC$; there are no nonconstant regular functions on the torus.
On the other hand,  we show that $H^0(M,\OOO)$ has rich combinatorial structure in the supertorus case of $n > 0$.
\begin{itemize}
\item  We show (Corollary~\ref{cor:basis}) that $H^0(M,\OOO)$ is generated as a $\CC$-algebra by the `basic invariants'
$\alpha_i, \alpha_i \theta_i,$ and $\alpha_i \theta_j + \alpha_j \theta_i$ for $1 \leq i < j \leq n$.
Furthermore, we give a combinatorial description
(Theorem~\ref{h0-presentation}) of the relations which hold among these invariants in terms of the classical Ptolemy relation 
\cite{Kauffman, KR2}
which resolves a simple crossing as a sum 
of its two possible resolutions.
\item Considering degree in the coefficient variables $\alpha_i$ as well as the 
$\theta_i$, the ring $H^0(M,\OOO)$ attains the structure of a bigraded module over the symmetric group $\symm_n$.
We compute its bigraded isomorphism type (Theorem~\ref{h0-module-structure}).  In particular, the $(i,j)$-component
$H^0(M,\OOO)_{i,j}$ is nonzero if and only if $i \geq j$, in which case it has dimension ${n \choose i}{n \choose j} - {n \choose i+1} {n \choose j-1}$.
Therefore, the dimensions of the spaces $H^0(M,\OOO)_{i,i}$ are the Narayana numbers $\mathrm{Nar}(n+1,i+1)$ for $i = 1, \dots, n$ and the sum
$\sum_{i = 0}^n H^0(M,\OOO)_{i,i}$ of these dimensions is the $(n+1)^{st}$ Catalan number $\mathrm{Cat}(n+1)$.
\end{itemize}

It follows from the second bullet point above that the vector spaces $H^0(M,\OOO)_{i,j}$ and $H^0(M,\OOO)_{n-j,n-i}$ have the same dimension
for all $i+j \leq n$.
We prove (Theorem~\ref{lefschetz-theorem}) that multiplication by $\ell^{n-i-j}$ where $\ell = \alpha_1 \theta_1 + \cdots + \alpha_n \theta_n$
furnishes a linear isomorphism between these spaces.
This fact may be viewed as a bigraded version of the Hard Lefschetz Theorem for the ring $H^0(M,\OOO)$.
We also show that the first cohomology group $H^1(M,\OOO)$ (see Definition~\ref{h1-definition}) and $H^0(M,\OOO)$ satisfy 
Serre duality (Theorem~\ref{perfect-pairing}).
The $n = 1$ case of these results was proven by the second author \cite{Rabin} in previous work.

We provide a bit more background in supergeometry, although this is not required for the rest of this paper.
The $n=1$ supertorus considered in \cite{Rabin} was in fact a super Riemann surface, with the second group generator above modified to $T: (z, \theta_1) \mapsto (z + t + \theta_1 \alpha_1, \theta_1 + \alpha_1)$. 
In principle, functions in $H^0(M,\OOO)$ or cocycles in $H^1(M,\OOO)$ might depend on $z$ as well as $\alpha_1, \theta_1$.
However, one can show by a Fourier series argument \cite{BR, Hodgkin, Rabin} (using the periodicity given by $S$) that they are in fact independent of $z$. 
This implies in particular that the extra even term $ \theta_1 \alpha_1$ in $T$ does not affect the cohomology and can be ignored. 
It also makes the generator $S$ trivial and allows us to compute the cohomology from $T$ alone acting on the fermionic variables. 
In this paper we will compute $H^1(M,\OOO)$ as the group cohomology of the cyclic group generated by $T$.
The equivalence of sheaf cohomology and group cohomology in this instance follows from \cite{Mumford, Silverman}.

Serre duality for supermanifolds has been proven in \cite{HaskeWells} for individual supermanifolds and in \cite{BR} for families. 
The duality we exhibit between $H^1(M,\OOO)$ and $H^0(M,\OOO)$ is an instance of this general structure
(technically, as applied to the total space of our family $M$ of supertori). 

Our results can be viewed as a first step toward a general theory of Abelian supervarieties, including the Jacobians of supercurves. 
In such a context the two generators $S,T$ would be extended to a set of $2g$ supertranslations 
$S_1, \ldots, S_g, T_1, \ldots, T_g$ representing a canonical homology basis of a torus of complex dimension $g$. 
The cohomology of the group they generate would be correspondingly more complicated, although still independent of the $g$ even coordinates and hence a purely algebraic problem. 
The computation of $H^1(M,\OOO)$ is of particular interest in view of its role in classifying topologically trivial line bundles 
on $M$ \cite{BR}.

\section{Background}
\label{Background}

We will need a notion of fermionic differentiation. 
Let $(\omega_1, \dots, \omega_m)$
be a list of $m$ fermionic variables and let $\wedge \{ \omega_1, \dots, \omega_m \}$ be the exterior algebra over these variables.  
For $1 \leq i \leq m$,
we define an operator $\partial/\partial \omega_i$ on $\wedge \{ \omega_1, \dots, \omega_m \}$ as the linear extension of
\begin{equation}
\partial/\partial \omega_i (\omega_{j_1} \cdots \omega_{j_r}) := \begin{cases}
(-1)^{s-1} \omega_{j_1} \cdots \widehat{\omega_{j_s}} \cdots \omega_{j_r} & \text{if $j_s = i$} \\
0 & \text{if $i \neq j_1, \dots, j_s$}
\end{cases}
\end{equation}
whenever $1 \leq j_1, \dots, j_s \leq m$ are distinct indices. 
Here the hat denotes omission. 

Let $G$ be a finite group. The {\em Grothendieck ring} of $G$ is the free $\ZZ$-module with basis given by isomorphism classes $[V]$
of irreducible $\CC[G]$-modules $V$. We extend the notation $[-]$ to arbitrary finite-dimensional $\CC[G]$-modules 
by setting $[V] = [W] + [U]$ whenever we have a short exact sequence $0 \rightarrow U \rightarrow V \rightarrow W \rightarrow 0$
of finite-dimensional $\CC[G]$-modules. The product structure on the Grothendieck ring is given by
\begin{equation}
[V] \cdot [W] := [V \otimes W]
\end{equation}
where $G$ acts on the vector space $V \otimes W$ by $g \cdot (v \otimes w) := (g \cdot v) \otimes (g \cdot w)$ for 
$g \in G, v \in V,$ and $w \in W$.
We will consider the Grothendieck ring of $G = \symm_n$ in this paper; this is a free $\ZZ$-algebra with basis indexed by the set of 
partitions $\lambda \vdash n$.
The structure constants for the canonical basis $[V^\lambda]$ of irreducible $\symm_n$-modules are the {\em Kronecker coefficients};
these are famously difficult to compute.

 We shall need a result guaranteeing the invertibility of a certain combinatorial matrix. Recall that the {\em Boolean poset}
$B_n$ has elements given by subsets $S \subseteq \{1, \dots, n \}$ and order relation $S \leq T$ if and only if $S \subseteq T$.
Let $B_{n,i}$ denote the family of size $i$ subsets of $\{1, \dots, n \}$; these are the rank $i$ elements in $B_n$.
The next result states that the incidence matrix between the complementary ranks $B_{n,i}$ and $B_{n,n-i}$ is invertible.

\begin{theorem}
\label{boolean-invertible}
For $0 \leq i \leq j \leq n$, let $M_n(i,j)$ be the ${n \choose i} \times {n \choose j}$ matrix with rows indexed by $B_{n,i}$,
 columns indexed by $B_{n,j}$, and entries determined by the rule
 \begin{equation}
 M_n(i,j)_{S,T} = \begin{cases}
 1 & S \subseteq T \\
 0 & \text{otherwise.}
 \end{cases}
 \end{equation}
 For all $0 \leq i \leq n/2$, the matrix $M_n(i,n-i)$ is invertible.
\end{theorem}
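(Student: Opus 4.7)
The plan is to realize $M_n(i, n-i)$, up to a nonzero scalar, as a power of a raising operator in an $\sell_2$-representation on the Boolean lattice and to deduce invertibility from the representation theory of $\sell_2$. For $0 \leq k \leq n$, let $V_k$ be the complex vector space with basis $\{e_S : S \in B_{n,k}\}$, and define the ``up'' and ``down'' operators
\begin{equation*}
U(e_S) = \sum_{T \supset S,\, |T|=|S|+1} e_T, \qquad D(e_T) = \sum_{S \subset T,\, |S|=|T|-1} e_S.
\end{equation*}
A routine count (counting chains of a given length between $S$ and $T$) shows that on $V_i$ one has $U^{n-2i} = (n-2i)! \cdot M_n(i, n-i)$, so it suffices to prove that $U^{n-2i} : V_i \to V_{n-i}$ is invertible.

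Next I would verify, by a direct combinatorial argument, that the commutator $[D, U]$ acts on $V_k$ as multiplication by $n - 2k$. The diagonal entries come from counting $(k{+}1)$-supersets (there are $n-k$ of them) and $(k{-}1)$-subsets (there are $k$ of them) of a given $S$, while the off-diagonal entries of $DU$ and $UD$ both count, with coefficient $1$, the size-$k$ sets $S' \neq S$ with $|S \cap S'| = k - 1$, so they cancel. Setting $E := D$, $F := U$, and letting $H$ act on $V_k$ as multiplication by $n - 2k$, one obtains the defining relations of $\sell_2$, and $V := \bigoplus_k V_k$ becomes a finite-dimensional $\sell_2$-module with $V_k$ the weight space of weight $n - 2k$.

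Finally I would appeal to the standard decomposition of a finite-dimensional $\sell_2$-module into irreducibles $V(m)$ of highest weight $m$. On each $V(m)$ and for $|w| \leq m$, the map $F^w$ restricts to an isomorphism between the (one-dimensional) weight-$w$ and weight-$(-w)$ subspaces; for $|w| > m$ both subspaces vanish. Taking direct sums over the irreducible summands of $V$ shows that $U^{n-2i} = F^{n-2i}$ is an isomorphism from the weight-$(n-2i)$ subspace $V_i$ onto the weight-$(2i-n)$ subspace $V_{n-i}$, yielding the invertibility of $M_n(i, n-i)$. The whole argument is routine once the $\sell_2$-module structure is in hand; the only step that needs care is the commutator computation, and an alternative would be to invoke the well-known normalized matching property of the Boolean lattice, but the $\sell_2$-approach has the pleasant feature of exhibiting the inverse as a power of the down operator $D$.
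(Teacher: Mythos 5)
Your argument is correct, but there is nothing in the paper to compare it against: the authors do not prove Theorem~\ref{boolean-invertible} at all, instead treating it as a known fact and citing Stanley \cite{Stanley} and Hara--Watanabe \cite{HW}. Your $\sell_2(\CC)$ proof is a clean, self-contained substitute and is essentially the classical mechanism underlying those references (the ``Hard Lefschetz for $(\PP^1)^n$'' viewpoint explicitly mentioned in the paper's discussion of \cite{HW}). The individual steps check out: the chain count gives $U^{n-2i}=(n-2i)!\,M_n(i,n-i)$ up to the harmless transposition coming from the row/column convention; the computation $[D,U]=(n-2k)\cdot\mathrm{id}$ on $V_k$ is right (both off-diagonal entries equal $1$ exactly when $|S\cap S'|=k-1$, so they cancel); and the reduction to weight spaces of finite-dimensional $\sell_2(\CC)$-modules is standard. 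The only point you gloss over is that on an irreducible $V(m)$ the weight-$w$ space can be zero (when $w\not\equiv m\bmod 2$ or $|w|>m$), but then the weight-$(-w)$ space is zero as well, so the claim is vacuously true and the direct-sum argument survives. It is worth noting how well your route fits the rest of the paper: in the proof of Theorem~\ref{h0-module-structure} the matrix of $\tau^{j-i}$ breaks into blocks that are exactly $(j-i)!$ times inclusion matrices, with $\tau$ restricting on each block to your up operator $U$, and the Remark closing Section~\ref{Module} records precisely the $\sell_2(\CC)$-triple $(\tau,\sigma,\eta)$ whose block restriction is your $(F,E,H)$. In effect you have replaced the authors' citation by the Lefschetz-type argument they themselves invoke elsewhere, which also exhibits the inverse of $M_n(i,n-i)$ in terms of the down operator $D$.
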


The origins of Theorem~\ref{boolean-invertible} are difficult to trace.
In \cite{Stanley}, Stanley used his theory of differential posets to calculate the (nonzero) eigenvalues 
of $M_n(i,n-i)$.
Another proof of Theorem~\ref{boolean-invertible} is due to Hara and Watanabe \cite{HW}.
In \cite{HW} Theorem~\ref{boolean-invertible} is viewed as a witness for the Hard Lefschetz property of 
the cohomology ring of the $n$-fold self product $\PP^1 \times \cdots \times \PP^1$ of the Riemann sphere.

\section{The bigraded $\symm_n$-structure of $H^0(M,\OOO)$ and $H^1(M,\OOO)$}
\label{Module}

Let $(\alpha_1, \dots, \alpha_n)$ and $(\theta_1, \dots, \theta_n)$ be two lists of fermionic variables and consider
the rank $2n$ exterior algebra 
\begin{equation}
E_n := \wedge \{ \alpha_1, \dots, \alpha_n, \theta_1, \dots, \theta_n \}
\end{equation}
on these variables over the ground field $\CC$. This algebra is doubly graded, with
\begin{equation}
(E_n)_{i,j} = 
\wedge^i \{ \alpha_1, \dots, \alpha_n \} \otimes \wedge^j \{ \theta_1, \dots, \theta_n \}
\end{equation}
and carries an action of $\symm_n$ 
\begin{equation}
w \cdot \theta_i := \theta_{w(i)} \quad \quad
w \cdot \alpha_i := \alpha_{w(i)} \quad \quad w \in \symm_n, \, \, 1 \leq i \leq n
\end{equation}
which respects this bigrading.

The zeroth cohomology group
$H^0(M,\OOO)$ is the set of globally defined regular functions on $M$, namely the functions on $\CC^{1 \mid n}$ that are invariant under 
the translations $S$ and $T$ of Eqs. \eqref{Stranslation} and \eqref{Ttranslation}. 
Since these functions are known to be independent of the even coordinate $z$, it suffices to check invariance under 
the action of $T$ on the odd coordinates only. 

Thus, we consider the map of algebras
\begin{equation}
T:  E_n \rightarrow E_n
\end{equation}
defined on generators by
\begin{equation}
T: \theta_i \mapsto \theta_i + \alpha_i \quad \quad T: \alpha_i \mapsto \alpha_i.
\end{equation}
The map $T$ acts by fermionic translation.
The zeroth cohomology group $H^0(M,\OOO)$ is related to $T$ as follows.

\begin{defn}
\label{h0-definition}
Let $H^0(M,\OOO) := \{ f \in E_n \,:\, T(f) = f \}$ be the subalgebra of 
$E_n$ fixed by $T$.
\end{defn}

Despite the fact that $T$ is not bigraded, its fixed subspace
$H^0(M,\OOO)$ is a bigraded subalgebra of $E_n$.
To show this,
we introduce the operator 
$\tau: E_n \rightarrow E_n$ given by
\begin{equation}
\tau(f) := \sum_{i = 1}^n \alpha_i \cdot (\partial/\partial \theta_i) f
\end{equation}
The operator $\tau$ is bihomogeneous of degree $(1,-1)$.

\begin{proposition}
\label{h0-is-bigraded}
Let $f \in E_n$.  We have $T(f) = f$ if and only if $\tau(f) = 0$.
In particular, the subalgebra $H^0(M,\OOO)$ of $E_n$ is bigraded.
\end{proposition}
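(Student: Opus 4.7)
The plan is to identify $T$ with the exponential of $\tau$. First I would verify that $\tau$ is an \emph{even} derivation of $E_n$: each partial $\partial/\partial \theta_i$ is an odd super-derivation, and multiplication by the odd element $\alpha_i$ contributes a second sign that exactly cancels the first, so $\alpha_i \, \partial/\partial \theta_i$ (and hence $\tau$) obeys the ordinary Leibniz rule $\tau(fg) = \tau(f) g + f \tau(g)$. Since $\tau$ is bihomogeneous of bidegree $(1,-1)$ on the bigraded algebra $E_n$ with $0 \leq i, j \leq n$, it is nilpotent with $\tau^{n+1} = 0$, so $\exp(\tau) := \sum_{k \geq 0} \tau^k/k!$ is a well-defined linear operator on $E_n$. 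Because $\tau$ is an even derivation, the standard induction gives the binomial identity $\tau^k(fg) = \sum_j \binom{k}{j} \tau^j(f) \tau^{k-j}(g)$, from which $\exp(\tau)(fg) = \exp(\tau)(f) \exp(\tau)(g)$, so $\exp(\tau)$ is an algebra homomorphism. Evaluating on generators yields $\exp(\tau)(\theta_i) = \theta_i + \alpha_i$ and $\exp(\tau)(\alpha_i) = \alpha_i$, matching $T$, so $T = \exp(\tau)$ on all of $E_n$.

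The easy implication ($\tau(f) = 0 \Rightarrow T(f) = f$) is then immediate: if $\tau(f) = 0$ then $\tau^k(f) = 0$ for all $k \geq 1$, so $T(f) = \exp(\tau)(f) = f$. For the converse, decompose $f = \sum_{i,j} f_{i,j}$ into bihomogeneous pieces. Since $\tau^k$ shifts bidegree by $(k,-k)$, extracting the bidegree-$(p, q)$ component of the equation $T(f) - f = 0$ yields
\[
\sum_{k \geq 1} \frac{\tau^k(f_{p-k,\, q+k})}{k!} = 0
\]
for every $(p,q)$. I would then prove $\tau(f_{i,j}) = 0$ for all $(i,j)$ by downward induction on $j$. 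When $j = n$, the displayed equation at bidegree $(p, n-1)$ collapses to $\tau(f_{p-1, n}) = 0$, since every $\tau^k(f_{p-k,\, n-1+k})$ with $k \geq 2$ vanishes for degree reasons ($n - 1 + k > n$). Assuming $\tau(f_{i, j'}) = 0$ whenever $j' > j$, the inductive hypothesis applied to $f_{p-k,\, j-1+k}$ (which has second index $j - 1 + k \geq j + 1 > j$ when $k \geq 2$) gives $\tau(f_{p-k,\, j-1+k}) = 0$ and hence $\tau^k(f_{p-k,\, j-1+k}) = 0$, so the displayed equation at bidegree $(p, j-1)$ again reduces to $\tau(f_{p-1, j}) = 0$. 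This completes the induction and shows $\tau(f) = 0$.

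The ``in particular'' clause follows because the kernel of the bihomogeneous operator $\tau$ is automatically a bigraded subspace of $E_n$, and by the equivalence just proved this kernel equals $H^0(M, \OOO)$. The main obstacle is the bookkeeping in the converse direction: one must extract the correct bidegree-by-bidegree consequences of $T(f) = f$ and order the induction so that each higher-order $\tau^k$ contribution is already known to vanish before it is needed. Once $T = \exp(\tau)$ is in hand the remainder is careful degree analysis.
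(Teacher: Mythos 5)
Your proof is correct and follows essentially the same route as the paper's: identify $T = \exp(\tau)$ (the paper checks this on monomials, you via the fact that $\tau$ is an even nilpotent derivation) and then deduce $\tau(f)=0$ from $T(f)=f$ by induction on $\theta$-degree. Your downward induction from $j=n$ is if anything slightly cleaner than the paper's upward induction from the minimal $\theta$-degree, since at each step the $k\geq 2$ terms $\tau^k(f_{p-k,\,j-1+k})$ vanish outright by the inductive hypothesis.
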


Our proof of Proposition~\ref{h0-is-bigraded} proceeds by showing that $\tau$ is an `infinitesimal' version of the translation operator $T$.

\begin{proof}
We claim that we have the equality of operators 
\begin{equation}
\label{exponential-expansion}
T = \mathrm{exp}(\tau) :=\mathrm{id} + \tau + \frac{1}{2!} \tau^2 + \frac{1}{3!} \tau^3 + \cdots 
\end{equation}
on  $E_n$.  Since $\tau^{n+1} = 0$ on $E_n$,
the RHS is really a finite sum. Indeed, this equality of linear operators may be easily checked on monomials using
the evaluation
\begin{equation}
T: \theta_{i_1} \cdots \theta_{i_r} \cdot \alpha_{j_1} \cdots \alpha_{j_s} \mapsto
(\theta_{i_1} + \alpha_{i_1}) \cdots (\theta_{i_r} + \alpha_{i_r}) \cdot \alpha_{j_1} \cdots \alpha_{j_s}.
\end{equation} 
If $\tau(f) = 0$, Equation~\eqref{exponential-expansion} immediately gives $T(f) = f$. 

On the other hand, if $T(f) = f$,
Equation~\eqref{exponential-expansion} gives the relation
\begin{equation}
\label{degree-is-off}
0 = \tau(f) + \frac{1}{2!} \tau^2(f) + \frac{1}{3!} \tau^3(f) + \cdots 
\end{equation}
inside $E_n$.
The $T$-invariant function $f$ may be written uniquely as a sum
$f = \sum_{i,j = 0}^n f_{i,j}$ where
$f_{i,j} \in (E_n)_{i,j}$.
If $f_{i,j} = 0$ for all $i, j$, then certainly $\tau(f) = 0$ and we are done. Otherwise, choose $j_0$ minimal such that
$f_{i,j_0} \neq 0$ for some $0 \leq i \leq n$ and let $f_{*,j_0} := \sum_{i = 0}^n f_{i,j_0}$.
Since $\tau$ has bidegree $(1,-1)$, Equation~\eqref{degree-is-off} and our choice of $j_0$ imply that 
$\tau(f_{*,j_0}) = 0$. Equation~\eqref{exponential-expansion} shows that $T(f_{*,j_0}) = f_{*,j_0}$, so that 
$T(f - f_{*,j_0}) = f - f_{*,j_0}$.
By induction on bidegree support, we have $\tau(f - f_{*,j_0}) = 0$.
We conclude that $\tau(f) = \tau(f_{*,j_0}) + \tau(f - f_{*,j_0}) = 0 + 0 = 0$.
\end{proof}

By Proposition~\ref{h0-is-bigraded}, we have a direct sum decomposition
\begin{equation}
H^0(M,\OOO) = \bigoplus_{i,j = 0}^n H^0(M,\OOO)_{i,j}
\end{equation}
in the category of bigraded rings or bigraded $\symm_n$-modules.
The module structure of $H^0(M,\OOO)_{i,j}$ is given as follows.

\begin{theorem}
\label{h0-module-structure}
We have $H^0(M,\OOO)_{i,j} = 0$ unless $i \geq j$.  If $i \geq j$, then 
\begin{align}
\left[ H^0(M,\OOO)_{i,j} \right] &= \left[   \wedge^i \CC^n \right] \cdot \left[   \wedge^j \CC^n  \right] -
 \left[   \wedge^{i+1} \CC^n  \right] \cdot \left[   \wedge^{j-1} \CC^n \right]  \\
 &= [ (E_n)_{i,j} ] - [ (E_n)_{i+1,j-1} ]
 \end{align}
 within the Grothendieck ring of $\symm_n$ where the action of $\symm_n$ on $\CC^n$ is by coordinate permutation. 
\end{theorem}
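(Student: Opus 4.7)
The plan is to translate the problem into $\sell_2$-representation theory. By Proposition~\ref{h0-is-bigraded}, the bigraded piece $H^0(M,\OOO)_{i,j}$ is precisely the kernel of the $\symm_n$-equivariant map $\tau : (E_n)_{i,j} \to (E_n)_{i+1,j-1}$, so I need to identify this kernel as an $\symm_n$-module. To this end I introduce the companion operator $\tau^* := \sum_i \theta_i \, \partial/\partial \alpha_i$ of bidegree $(-1,1)$, and the bigrading operator $H := \sum_i \left(\alpha_i \, \partial/\partial \alpha_i - \theta_i \, \partial/\partial \theta_i\right)$, which acts on $(E_n)_{i,j}$ as the scalar $i-j$. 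A direct calculation, using the super-Leibniz identities $\partial_{\theta_i}\alpha_j = -\alpha_j \partial_{\theta_i}$ and $\partial_{\alpha_i}\theta_j = -\theta_j \partial_{\alpha_i}$ to move odd operators past each other, shows that $[\tau,\tau^*] = H$, $[H,\tau] = 2\tau$, and $[H,\tau^*] = -2\tau^*$. Thus $(\tau,\tau^*,H)$ endows $E_n$ with the structure of a finite-dimensional $\sell_2$-module whose weight space $V_k := \bigoplus_{i-j=k}(E_n)_{i,j}$ collects the slices of fixed weight $k = i-j$, and each of the three operators commutes with the diagonal $\symm_n$-action on $E_n$.

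Now I invoke standard $\sell_2$ representation theory. In any finite-dimensional $\sell_2$-module the raising operator $\tau$ is injective on $V_k$ when $k < 0$ (there are no highest-weight vectors of negative weight) and surjects onto $V_{k+2}$ when $k \geq 0$ (compare dimensions summand by summand in the decomposition into irreducibles). Because $\tau$ strictly preserves bidegree — sending $(E_n)_{i,j}$ into $(E_n)_{i+1,j-1}$, and the rule $(i,j) \mapsto (i+1,j-1)$ is a bijection between the bidegree components of $V_k$ and those of $V_{k+2}$ — these statements descend to each bigraded slice. Hence $H^0(M,\OOO)_{i,j} = 0$ when $i<j$, and when $i \geq j$ we obtain the short exact sequence of $\symm_n$-modules
\[
0 \to H^0(M,\OOO)_{i,j} \to (E_n)_{i,j} \xrightarrow{\tau} (E_n)_{i+1,j-1} \to 0.
\]
Passing to the Grothendieck ring and recognizing $(E_n)_{i,j} \cong \wedge^i\CC^n \otimes \wedge^j\CC^n$ as an $\symm_n$-module (where $\symm_n$ permutes coordinates diagonally) yields the claimed identity.

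The main technical obstacle is the direct verification of the $\sell_2$-triple relations, especially $[\tau,\tau^*] = H$; this is a hands-on calculation but demands careful sign bookkeeping across the supercommutations of $\alpha_i$, $\theta_j$, and the two families of fermionic derivatives. Once this identity is in place, the remainder of the proof is formal: representation theory of $\sell_2$ on each weight space $V_k$, together with the definition of the Grothendieck ring, delivers both the vanishing when $i<j$ and the module identity when $i \geq j$.
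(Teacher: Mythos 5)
Your proof is correct, but it takes a genuinely different route from the paper's. The paper proves the key injectivity/surjectivity of $\tau: (E_n)_{i,j} \to (E_n)_{i+1,j-1}$ by analyzing the iterated map $\tau^{j-i}: (E_n)_{i,j} \to (E_n)_{j,i}$ between complementary bidegrees in the sign-free monomial basis $m_{A,B}$: its matrix decomposes as a direct sum of Boolean-lattice incidence matrices $M_n(i,n-i)$ (scaled by $(j-i)!$), whose invertibility is quoted from Stanley and Hara--Watanabe (Theorem~\ref{boolean-invertible}); bijectivity of the composite then yields injectivity for $i<j$ and surjectivity for $i\geq j$, and the short exact sequence finishes the argument exactly as you do. You instead complete $\tau$ to an $\sell_2(\CC)$-triple with $\tau^* = \sum_i \theta_i\,\partial/\partial\alpha_i$ and the weight operator $H$, and read off injectivity on negative weight spaces and surjectivity onto nonnegative ones from the structure theory of finite-dimensional $\sell_2$-modules. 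The paper itself records this $\sell_2$-structure in the remark closing Section~3 (your $(\tau,\tau^*,H)$ is their $(\tau,\sigma,\eta)$, with $[\tau,\sigma]=\eta$ checked on monomials), so the commutator identity you flag as the main technical obstacle is confirmed there; your descent from the weight spaces $V_k$ to individual bigraded slices is also sound, since $\tau$ carries each $(E_n)_{i,j}$ into a single summand of $V_{k+2}$ and the only target components not hit are zero. The trade-off: the paper's combinatorial route builds the explicit basis $m_{A,B}$ that is reused verbatim in the proof of Theorem~\ref{lefschetz-theorem}, while your argument is more conceptual, avoids importing Theorem~\ref{boolean-invertible}, and in fact reproves it, since the block decomposition of $\tau^{j-i}$ shows the invertibility of the Boolean incidence matrices is equivalent to the bijectivity your $\sell_2$ argument establishes.
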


The bigraded $\symm_n$-structure of $H^0(M,\OOO)$ is very similar to that of the bigraded $\symm_n$-module $FDR_n$ of {\em fermionic diagonal coinvariants}
introduced in \cite{KR1} and further studied in \cite{Kim, KR2}.
The following proof should be compared to those of \cite[Thm. 3.2, Thm. 4.2]{KR1}.

\begin{proof}
Let $(i,j)$ be a fixed bidegree. For any $r \geq 0$, we have an operator
\begin{equation}
\tau^r: (E_n)_{i,j} \longrightarrow 
(E_n)_{i+r,j-r}
\end{equation}
When $i \leq j$ and $r = j-i$, we have a map
\begin{equation}
\label{complementary-composition}
\tau^{j-i}: (E_n)_{i,j} \longrightarrow
(E_n)_{j,i}
\end{equation}
between complementary bidegrees.

We claim that the map $\tau^{j-i}$ appearing in \eqref{complementary-composition} is bijective. To prove this, we examine its 
matrix with respect to a strategic choice of bases.  

Given subsets $A, B \subseteq \{1, \dots, n \}$, let
$m_{A,B} \in E_n$ be the monomial whose variables consist of $\alpha_a$ 
for $a \in A$ and $\theta_b$ for $b \in B$ written in increasing order with respect to
\begin{equation}
\alpha_1 < \theta_1 < \alpha_2 < \theta_2 < \cdots < \alpha_n < \theta_n.
\end{equation}
For example, if $n = 8, A = \{2,3,5\},$ and $B = \{1,3,4,6\}$ we have
\begin{equation*}
m_{A,B} = \theta_1 \cdot  \alpha_2 \cdot \alpha_3 \theta_3 \cdot \theta_4 \cdot \alpha_5 \cdot \theta_6. 
\end{equation*}
It is not hard to see that
\begin{equation}
\label{basic-transition}
\tau(m_{A,B}) = \sum_{\substack{c \notin A \\ c \in B}} m_{A  \, \cup \, c, B \, - \, c}.
\end{equation}
The definition of $m_{A,B}$ was chosen so that Equation~\eqref{basic-transition} is free of signs. Iterating Equation~\eqref{basic-transition}
$j-i$ times, we see that 
\begin{equation}
\label{iterated-transition}
\tau^{j-i}(m_{A,B}) = (j-i)! \cdot \sum_{\substack{|C| \, = \, j-i \\ C \,  \cap  \, A  \, = \, \varnothing \\ C \, \subseteq \, B}} m_{A  \, \cup \, C, B \, - \, C}.
\end{equation}
The matrix for $\tau^{j-i}$ with respect to the $m_{A,B}$ basis therefore breaks up into a direct sum of matrices
indexed by subsets $D = A \cup B$ where each direct summand is $(j-i)!$ times a matrix which Theorem~\ref{boolean-invertible}
guarantees is invertible.
We conclude that $\tau^{j-i}$ is invertible whenever $i \leq j$.

Whenever a composition $f \circ g$ of functions is a bijection, the map $f$ is a surjection and the map $g$ is an injection.
We conclude that 
\begin{equation}
\label{single-tau-step}
\tau: (E_n)_{i,j} \longrightarrow 
(E_n)_{i+1,j-1}
\end{equation}
is injective whenever $i < j$. Proposition~\ref{h0-is-bigraded} implies that $H^0(M,\OOO)_{i,j} = 0$ when $i < j$.
Similarly, if $i \geq j$, the map in Equation~\eqref{single-tau-step} is surjective.
Since $\tau$ is $\symm_n$-equivariant, Proposition~\ref{h0-is-bigraded} implies the desired relation in the Groethendieck ring 
of $\symm_n$.
\end{proof}

As a consequence of Theorem~\ref{h0-module-structure}, we have
\begin{equation}
\dim H^0(M,\OOO)_{i,j} =  \begin{cases}
{n \choose i} {n \choose j} - {n \choose i+1} {n \choose j-1} & i \geq j \\ 
0 & i < j
\end{cases}
\end{equation}
which implies that
\begin{equation}
\label{dimension-coincidence}
\dim H^0(M,\OOO)_{i,j} = \dim H^0(M,\OOO)_{n-j,n-i} \quad \quad \text{for $i+j \leq n$}.
\end{equation}
The following result shows that multiplication by the appropriate power of
\begin{equation}
\label{ell-definition}
\ell := \alpha_1 \theta_1 + \cdots + \alpha_n \theta_n
\end{equation}
yields a linear isomorphism $H^0(M,\OOO)_{i,j} \xrightarrow{\, \sim \, } H^0(M,\OOO)_{n-j,n-i}$.

\begin{theorem}
\label{lefschetz-theorem}
Suppose $i + j \leq n$.  We have a linear isomorphism
\begin{equation}
(-) \times \ell^{n-i-j}: H^0(M,\OOO)_{i,j} \xrightarrow{ \, \, \sim \, \, } H^0(M,\OOO)_{n-j,n-i}
\end{equation}
where $\ell \in E_n$ is given by \eqref{ell-definition}.
\end{theorem}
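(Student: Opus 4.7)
The plan is to first prove a stronger statement at the level of the ambient algebra $E_n$, namely that $\ell^{n-i-j}: (E_n)_{i,j} \to (E_n)_{n-j,n-i}$ is itself a linear isomorphism whenever $i+j \leq n$, and then to restrict this to $H^0(M,\OOO)$ using the dimension coincidence \eqref{dimension-coincidence} from Theorem~\ref{h0-module-structure}.

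First I would observe that $\ell = \sum_k m_{\{k\},\{k\}}$ lies in $H^0(M,\OOO)_{1,1}$: applying \eqref{basic-transition} to $A = B = \{k\}$ gives $\tau(m_{\{k\},\{k\}}) = 0$ since the sum is vacuous. Since $H^0(M,\OOO)$ is a subalgebra of $E_n$ by Proposition~\ref{h0-is-bigraded}, multiplication by $\ell^{n-i-j}$ automatically carries $H^0(M,\OOO)_{i,j}$ into $H^0(M,\OOO)_{n-j,n-i}$.

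The main step is the ambient isomorphism. Because each $\alpha_k\theta_k$ is even, the factors in $\ell^{n-i-j}$ commute pairwise, and expanding in the $m_{A,B}$-basis yields the sign-free formula
\begin{equation*}
\ell^{n-i-j}(m_{A,B}) = (n-i-j)! \sum_{\substack{K \subseteq [n] \setminus (A \cup B) \\ |K| = n-i-j}} m_{A \cup K,\, B \cup K},
\end{equation*}
in direct analogy with \eqref{iterated-transition}. Since $K$ is disjoint from $D := A \cup B$, the ``reduced data'' $(E, A_E, B_E) := (D - I,\, A - I,\, B - I)$, with $I := A \cap B$, is preserved by $\ell^{n-i-j}$. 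I would therefore decompose both source and target along this invariant: for a triple $(E, A_E, B_E)$ with $A_E \sqcup B_E = E$ and $|A_E| - |B_E| = i - j$ (the emptiness condition on both sides), the source block is spanned by size-$p$ subsets $I \subseteq [n] \setminus E$ with $p := i - |A_E|$, the target block by size-$q$ subsets $I' \subseteq [n] \setminus E$ with $q := n - j - |A_E|$, and the map acts within the block as $(n-i-j)!$ times the upward incidence $I \mapsto \sum_{I' \supseteq I,\, |I'| = q} I'$. A direct count using $|A_E| + |B_E| = |E|$ and $|A_E| - |B_E| = i-j$ gives $p + q = n - |E|$, so $p$ and $q$ are complementary ranks of the Boolean poset on $[n] \setminus E$, while $i + j \leq n$ supplies $p \leq q$. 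Theorem~\ref{boolean-invertible} then guarantees that each block is invertible, whence $\ell^{n-i-j}$ is an isomorphism on all of $(E_n)_{i,j}$.

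Finally, the restriction of $\ell^{n-i-j}$ to $H^0(M,\OOO)_{i,j}$ is injective as a restriction of an injective map, and its image lies in $H^0(M,\OOO)_{n-j,n-i}$ by the first paragraph; the dimension identity \eqref{dimension-coincidence} then promotes injectivity to a linear isomorphism. The main obstacle is the combinatorial decomposition: pinning down the $(E, A_E, B_E)$-blocks and recognizing each restricted map as an incidence between complementary Boolean levels, after which Theorem~\ref{boolean-invertible} closes the argument immediately.
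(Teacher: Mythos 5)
Your proposal is correct and follows essentially the same route as the paper: reduce to the ambient claim that $\ell^{n-i-j}:(E_n)_{i,j}\to(E_n)_{n-j,n-i}$ is bijective via a sign-free monomial basis (the paper uses the reordered monomials $m'_{A,B}$, but since each $\alpha_c\theta_c$ is central your $m_{A,B}$ version works identically), block-decompose along the data preserved by adding $K$ to both index sets, identify each block as a Boolean incidence matrix between complementary ranks, invoke Theorem~\ref{boolean-invertible}, and then restrict using the dimension coincidence \eqref{dimension-coincidence}. Your write-up is in fact more explicit about the block structure than the paper, which defers that step to ``the same reasoning as in the proof of Theorem~\ref{h0-module-structure}.''
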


\begin{proof}
Since $\ell$ is $T$-invariant, we have $\ell \in H^0(M,\OOO)$ and the given map is well-defined. 
By the dimension equality \eqref{dimension-coincidence} it suffices to show that we have a linear isomorphism
\begin{equation}
\label{new-isomorphism}
(-) \times \ell^{n-i-j}: (E_n)_{i,j} \xrightarrow{ \, \, \sim \, \, } (E_n)_{n-j,n-i}.
\end{equation}
The proof that \eqref{new-isomorphism} is bijective is similar to that of Theorem~\ref{h0-module-structure}. For subsets $A, B \subseteq \{1,\dots,n\}$, let 
$m'_{A,B} \in E_n$ be the monomial given by
\begin{equation}
m'_{A,B} := \prod_{c \in A \cap B} \alpha_c \cdot \theta_c \times \prod_{a \in A - B} \alpha_a \times \prod_{b \in B-A} \theta_b
\end{equation}
where each product is taken in increasing order. For example, if $A = \{2,4,5,7\}$ and $B = \{3,4,7\}$ then
$m'_{A,B} = ( \alpha_4 \theta_4 \alpha_7 \theta_7) \cdot \alpha_2 \alpha_5 \cdot \theta_3$. It is not hard to see that
\begin{equation}
\label{basic-ell-transition}
\ell \cdot m'_{A,B} = \sum_{c \, \notin  \, A \cup B} m'_{A \cup c, B \cup c}
\end{equation}
where the definition of $m'_{A,B}$ guarantees that Equation~\eqref{basic-ell-transition} is free of signs.
The proof that the map in \eqref{new-isomorphism} is bijective now follows from the same reasoning as in the proof of 
Theorem~\ref{h0-module-structure}.
The isomorphism \eqref{new-isomorphism} restricts to give the isomorphism in the statement of the theorem.
\end{proof}

The first cohomology group $H^1(M,\OOO)$ of the sheaf $\OOO$ classifies line bundles over $M$ which are topologically trivial (i.e. whose Chern class vanishes).
In order to compute  $H^1(M,\OOO)$,
we recall some notions from group cohomology.

Let $G$ be an abelian group and let $N$ be a $G$-module. 
A {\em $1$-cocycle} is a function $c: G \rightarrow N$ which satisfies
\begin{equation}
c(gh) = h \cdot c(g) + c(h) \quad \quad \text{for all $g, h \in G$.}
\end{equation}
A $1$-cocycle $c: G \rightarrow N$ is determined by its values on a generating set of $G$.
A {\em $1$-coboundary} is a function $c_n: G \rightarrow N$ of the form
\begin{equation}
c_n(g) := g \cdot n - n
\end{equation}
for some $n \in N$. It is easily seen that every $1$-coboundary is a 1-cocycle. 
The sets of 1-cocycles and 1-coboundaries both form groups under pointwise addition.
The {\em first cohomology group} is the quotient
\begin{equation}
H^1(G,N) := \text{(1-cocycles)}/\text{(1-coboundaries)}
\end{equation}
of the group of 1-cocycles by its subgroup of 1-coboundaries.

From the equivalence between sheaf and group cohomology \cite{Mumford}, $H^1(M,\OOO) \cong H^1(G,N)$, where
$G$ is generated by the supertranslations $S,T$ of \eqref{Stranslation}, \eqref{Ttranslation} acting on the module of functions $N$ on $\CC^{1 \mid n}$. 
Proposition (B.1.3) of \cite{Silverman} gives an exact sequence
\begin{equation}
0 \rightarrow H^1((T),N^S) \rightarrow H^1(G,N) \rightarrow H^1((S),N)
\end{equation} 
whose final term vanishes.
Thus $H^1(G,N)$ can be computed as the cohomology of the cyclic group generated by $T$ acting on the $S$-invariant functions in $N$.
Since this cohomology is also known to be independent of $z$, we can take $S$ to be the identity and simply compute the
cohomology of $(T)$ acting on $E_n$. 

Thus, for our purposes, the group $G = \langle g \rangle$ is the infinite cyclic group and the module $N$ is the exterior algebra $E_n$.
The generator $g$ of $G$ acts on $E_n$ by the translation operator $T$:
\begin{equation}
g \cdot f := T(f) \quad \quad f \in E_n.
\end{equation}
We may identify the group of $1$-cocycles with 
$E_n$ itself. Indeed, given an element
$f \in E_n$, the corresponding 1-cocycle $c_f: G \rightarrow M$ determined by
$c_f(g) = f$.  This motivates the following definition.

\begin{defn}
\label{h1-definition}
Let $H^1(M,\OOO)$ be the quotient of $E_n$ by its linear subspace
\begin{equation}
\{ T(f) - f \,:\, f \in E_n \}
\end{equation}
of 1-coboundaries.
\end{defn}

The denominator in the quotient space of Definition~\ref{h1-definition} may be written in 
another way using the operator $\tau$.
This will allow us to deduce that, like $H^0(M,\OOO)$, the first cohomology $H^1(M,\OOO)$ is bigraded.

\begin{proposition}
\label{h1-alternative-characterization}
The set $\{ T(f) - f \,:\, f \in E_n \}$ of 1-coboundaries equals the image
of the map
\begin{equation}
\tau: E_n \rightarrow E_n.
\end{equation}
Thus, the quotient  $H^1(M,\OOO) = \bigoplus_{i,j = 0}^n H^1(M,\OOO)_{i,j}$ is the cokernel of the bihomogeneous map $\tau$.
In particular, the vector  space $H^1(M,\OOO)$ is bigraded.
\end{proposition}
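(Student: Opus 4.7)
The plan is to leverage the identity $T = \exp(\tau)$ established in the proof of Proposition~\ref{h0-is-bigraded} to factor the coboundary operator $T - \mathrm{id}$ through $\tau$ via an invertible companion. This will identify the image of $T-\mathrm{id}$ with the image of $\tau$ on the nose.

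First I would write
\begin{equation*}
T - \mathrm{id} \;=\; \tau + \tfrac{1}{2!}\tau^2 + \tfrac{1}{3!}\tau^3 + \cdots \;=\; \tau \circ U,
\end{equation*}
where $U := \mathrm{id} + \tfrac{1}{2!}\tau + \tfrac{1}{3!}\tau^2 + \cdots$. Because $\tau$ raises $\alpha$-degree by one, the nilpotency $\tau^{n+1}=0$ on $E_n$ guarantees that $U$ is a finite sum. Since $U$ has leading term $\mathrm{id}$ and differs from it by a nilpotent operator, $U$ is invertible on $E_n$: its inverse is given by the truncated Neumann series $\sum_{k\geq 0} (-1)^k (U-\mathrm{id})^k$, which terminates. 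In particular $U$ restricts to a bijection $E_n \to E_n$.

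Now the factorization $T-\mathrm{id} = \tau \circ U$ gives
\begin{equation*}
\{T(f) - f : f \in E_n\} \;=\; (\tau \circ U)(E_n) \;=\; \tau(U(E_n)) \;=\; \tau(E_n) \;=\; \mathrm{image}(\tau),
\end{equation*}
using surjectivity of $U$ in the third equality. This is the main equality claimed. For the bigrading assertion, note that $\tau$ is bihomogeneous of bidegree $(1,-1)$, so its image is a bigraded linear subspace of the bigraded vector space $E_n$. Consequently the quotient $H^1(M,\OOO) = E_n / \mathrm{image}(\tau)$ inherits a bidecomposition $\bigoplus_{i,j=0}^n H^1(M,\OOO)_{i,j}$ and is manifestly the cokernel of the bihomogeneous map $\tau$.

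There is no real obstacle here: the only nontrivial step is confirming invertibility of $U$, which is immediate from the nilpotence of $\tau$ on $E_n$. Everything else is formal manipulation of the exponential expansion already derived in Proposition~\ref{h0-is-bigraded}.
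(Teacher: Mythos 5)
Your proof is correct, and for half of the argument it is genuinely slicker than the paper's. Both you and the paper start from the expansion $T = \exp(\tau)$ and the resulting identity $T - \mathrm{id} = \tau + \tfrac{1}{2!}\tau^2 + \cdots$; the paper uses exactly your factorization $T(f) - f = \tau\bigl(f + \tfrac{1}{2!}\tau(f) + \cdots\bigr)$ to get the containment of the coboundaries in $\mathrm{image}(\tau)$. Where you diverge is the reverse containment: the paper proves $\mathrm{image}(\tau) \subseteq \{T(f)-f\}$ by an induction on $\theta$-degree, rewriting $\tau(f) = (T(f)-f) - \tau\bigl(\tfrac{1}{2!}\tau(f) + \cdots\bigr)$ and observing that the correction term has strictly smaller $\theta$-degree. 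You instead package the whole expansion as $T - \mathrm{id} = \tau \circ U$ with $U = \mathrm{id} + \tfrac{1}{2!}\tau + \cdots$ and note that $U$ is invertible because it is the identity plus a nilpotent (a terminating Neumann series gives the inverse, and $U$ commutes with $\tau$ since both are polynomials in $\tau$); equality of images then follows in one line from surjectivity of $U$. Your route eliminates the induction entirely and establishes both containments simultaneously, at the modest cost of the observation about invertibility; the paper's induction is more hands-on but requires no auxiliary operator. The bigrading conclusion is handled identically in both arguments, via the bihomogeneity of $\tau$.
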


\begin{proof}
Equation~\ref{exponential-expansion} shows that for any $f \in E_n$ we have
\begin{equation}
T(f) - f = \tau(f) + \frac{1}{2!} \tau^2(f) + \frac{1}{3!} \tau^3(f) + \cdots = \tau \left( f + \frac{1}{2!} \tau(f) + \frac{1}{3!} \tau^2(f) + \cdots    \right)
\end{equation}
so that every 1-coboundary is in the image of $\tau$.

The reverse containment follows from induction on $\theta$-degree. If $f \in E_n$ 
has $\theta$-degree 0, then $\tau(f) = 0 = T(0) - 0$ is a 1-coboundary. In general, we have
\begin{equation}
\tau(f) = T(f) - f - \frac{1}{2!} \tau^2(f) - \frac{1}{3!} \tau^3(f) + \cdots = 
(T(f) - f) - \tau \left(  \frac{1}{2!} \tau(f) + \frac{1}{3!} \tau^2(f) + \cdots   \right).
\end{equation}
The term $T(f) - f$ is a 1-coboundary by definition and the sum $\frac{1}{2!} \tau(f) + \frac{1}{3!} \tau^2(f) + \cdots $ has strictly lower
$\theta$-degree than $f$, so its image  
$\tau \left(  \frac{1}{2!} \tau(f) + \frac{1}{3!} \tau^2(f) + \cdots   \right)$ under $\tau$ is a 1-coboundary by induction.
\end{proof}

Since $\tau$ is $\symm_n$-equivariant, the bigraded vector space $H^1(M,\OOO) = \bigoplus_{i,j = 0}^n H^1(M,\OOO)_{i,j}$ attains the status 
of a bigraded $\symm_n$-module. 
The structure of this module may be determined using the same ideas as in the proof of 
Theorem~\ref{h0-module-structure}.

\begin{theorem}
\label{h1-module-structure}
The vector space $H^1(M,\OOO)_{i,j}$ is zero unless $i \leq j$.  When $i \leq j$, we have the equality
\begin{align}
\left[  H^1(M,\OOO)_{i,j} \right] &= 
\left[  \wedge^i \CC^n \right] \cdot \left[  \wedge^j \CC^n \right] -
\left[ \wedge^{i-1} \CC^n \right] \cdot \left[  \wedge^{j+1} \CC^n \right] \\
&= [ (E_n)_{i,j} ] - [(E_n)_{i-1,j+1}]
\end{align}
in the Grothendieck ring of $\symm_n$. Here $\symm_n$ acts on $\CC^n$ by coordinate permutation.
\end{theorem}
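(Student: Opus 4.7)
The plan is to apply Proposition~\ref{h1-alternative-characterization}, which identifies $H^1(M,\OOO)_{i,j}$ with the cokernel of the $\symm_n$-equivariant map $\tau\colon (E_n)_{i-1,j+1} \to (E_n)_{i,j}$. The key technical input is exactly the one established in the proof of Theorem~\ref{h0-module-structure}: the iterated map $\tau^{b-a}\colon (E_n)_{a,b} \to (E_n)_{b,a}$ between complementary bidegrees is a $\symm_n$-equivariant linear isomorphism whenever $a \le b$, via the block-diagonal decomposition of $\tau$ indexed by subsets $A \cup B$ together with Theorem~\ref{boolean-invertible}.

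Suppose first that $i > j$. Then I would factor the bijection $\tau^{i-j}\colon (E_n)_{j,i} \xrightarrow{\,\sim\,} (E_n)_{i,j}$, valid since $j \le i$, as the composition
\[
(E_n)_{j,i} \xrightarrow{\tau^{i-j-1}} (E_n)_{i-1,j+1} \xrightarrow{\tau} (E_n)_{i,j}.
\]
Since the full composition is surjective, its final factor must be surjective, so $H^1(M,\OOO)_{i,j} = 0$.

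Suppose instead that $i \le j$. Then I would factor the bijection $\tau^{j-i+2}\colon (E_n)_{i-1,j+1} \xrightarrow{\,\sim\,} (E_n)_{j+1,i-1}$, valid since $i-1 \le j+1$, as
\[
(E_n)_{i-1,j+1} \xrightarrow{\tau} (E_n)_{i,j} \xrightarrow{\tau^{j-i+1}} (E_n)_{j+1,i-1}.
\]
Since the composition is injective, the first factor $\tau$ is an injection of $\symm_n$-modules, so its image in $(E_n)_{i,j}$ is $\symm_n$-isomorphic to $(E_n)_{i-1,j+1}$. Taking the class of the cokernel yields
\[
[H^1(M,\OOO)_{i,j}] = [(E_n)_{i,j}] - [(E_n)_{i-1,j+1}],
\]
which under the identification $(E_n)_{a,b} \cong \wedge^a \CC^n \otimes \wedge^b \CC^n$ of $\symm_n$-modules becomes $[\wedge^i \CC^n][\wedge^j \CC^n] - [\wedge^{i-1} \CC^n][\wedge^{j+1} \CC^n]$.

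This is a direct dualization of the proof of Theorem~\ref{h0-module-structure}: there, $H^0_{i,j}$ arose as the kernel of the first factor of a bijective composition; here, $H^1_{i,j}$ arises as the cokernel of the last factor. No new combinatorial input is needed beyond Theorem~\ref{boolean-invertible}, so the main (mild) obstacle is purely bookkeeping, namely selecting the right complementary-bidegree bijection in each case so that the injectivity or surjectivity conclusion lands on the specific $\tau$ appearing in the definition of $H^1_{i,j}$. Edge cases where $i-1 < 0$ or $j+1 > n$ are handled automatically, since the corresponding $(E_n)_{a,b}$ is zero by convention and both sides of the claimed identity degenerate consistently.
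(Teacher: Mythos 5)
Your proof is correct and follows essentially the same route as the paper: identify $H^1(M,\OOO)_{i,j}$ with the cokernel of $\tau\colon (E_n)_{i-1,j+1}\to (E_n)_{i,j}$ via Proposition~\ref{h1-alternative-characterization}, and deduce the surjectivity (for $i>j$) or injectivity (for $i\le j$) of that $\tau$ by factoring the complementary-bidegree bijections built from Theorem~\ref{boolean-invertible} exactly as in the proof of Theorem~\ref{h0-module-structure}. If anything, your explicit factorizations are slightly more careful than the paper's one-line citation, since they transparently cover the boundary case $i=j+1$.
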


\begin{proof}
As in the proof of Theorem~\ref{h0-module-structure}, the map
\begin{equation}
\tau: (E_n)_{i,j} \longrightarrow (E_n)_{i+1,j-1}
\end{equation}
is injective whenever $i < j$ and surjective whenever $i \geq j$. Since $\tau$ is $\symm_n$-equivariant, the result
follows from Proposition~\ref{h1-alternative-characterization}.
\end{proof}

Let $V$ and $W$ be finite-dimensional complex vector spaces and let  $\langle -, - \rangle: V \otimes W \rightarrow \CC$ be a bilinear pairing between them.
The pairing $\langle -, - \rangle$ is 
{\em perfect} if for all nonzero vectors $v \in V$, there exists a vector $w \in W$ such that $\langle v, w \rangle \neq 0$ and 
for all nonzero vectors $w \in W$, there exists a vector $v \in V$ such that $\langle v, w \rangle \neq 0$.
In particular, if a perfect pairing exists, then $\dim V = \dim W$.

Let $X$ be an $n$-dimensional smooth complex projective variety with canonical line bundle $\KKK_X$. If $\EEE$ is an algebraic vector bundle over $X$,
{\em Serre duality} furnishes a perfect pairing
\begin{equation}
H^i(X,\EEE) \otimes H^{n-i}(X,\KKK_X \otimes \EEE^*) \longrightarrow \CC
\end{equation}
between sheaf cohomology groups of complementary degree. This is an analogue of Poincar\'e duality for sheaf cohomology.

We describe a supergeometric version of Serre duality which holds between the bigraded rings $H^0(M,\OOO)$ and $H^1(M,\OOO)$.  
Consider the `volume form' $\vol_n := \alpha_1 \cdots \alpha_n \cdot \theta_1 \cdots \theta_n$ in
$E_n$
and define a bilinear pairing $\langle - , - \rangle$ on 
$E_n$
by
\begin{equation}
\langle f, g \rangle := \text{coefficient of $\vol_n$ in $f \cdot g$}.
\end{equation}

\begin{theorem}
\label{perfect-pairing}
The pairing $\langle -, - \rangle$ induces a perfect pairing 
$H^0(M,\OOO) \otimes H^1(M,\OOO) \rightarrow \CC$.
\end{theorem}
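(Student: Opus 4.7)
The plan is to realize both $H^0(M,\OOO)$ and $H^1(M,\OOO)$ as the kernel and cokernel of the single operator $\tau$ on $E_n$—Proposition~\ref{h0-is-bigraded} for the kernel, Proposition~\ref{h1-alternative-characterization} for the cokernel—and to deduce the perfect pairing from a skew-adjointness property of $\tau$ under $\langle -,-\rangle$.

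I would begin by observing that the top-form pairing is already a perfect pairing on the full exterior algebra $E_n$: in the monomial basis $m_{A,B}$ from the proof of Theorem~\ref{h0-module-structure}, each basis vector of bidegree $(i,j)$ pairs to $\pm 1$ with the unique complementary monomial of bidegree $(n-i,n-j)$ and to $0$ with every other basis element, so the Gram matrix is nonsingular. Granting this, the theorem reduces to the skew-adjointness identity
\begin{equation*}
\langle \tau(f), g\rangle = -\langle f, \tau(g)\rangle \qquad \text{for all } f,g \in E_n.
\end{equation*}
Indeed, for $f\in \ker\tau$ we then have $\langle f, \tau(h)\rangle = -\langle \tau(f), h\rangle = 0$, so the pairing descends to $\ker\tau \otimes (E_n/\mathrm{im}\,\tau) = H^0(M,\OOO) \otimes H^1(M,\OOO)$. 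Moreover $\mathrm{im}\,\tau \subseteq (\ker\tau)^\perp$, with equality forced by the rank-nullity count $\dim(\ker\tau)^\perp = \dim E_n - \dim \ker\tau = \dim\mathrm{im}\,\tau$. The standard fact that a perfect pairing on a space $V$ induces a perfect pairing on $W \otimes (V/W^\perp)$ for any subspace $W$ then yields the theorem.

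The main obstacle is thus establishing skew-adjointness. My approach would be to first check that $\tau$ is an even derivation of $E_n$: each summand $\alpha_i \partial/\partial\theta_i$ is the composition of two odd operators and hence even, and the super-Leibniz rule for $\partial/\partial\theta_i$ together with the anticommutation $\alpha_i f = (-1)^{|f|} f \alpha_i$ contribute two cancelling sign factors. Now $\tau$ has bidegree $(+1,-1)$ while the bidegree $(n-1,n+1)$ component of $E_n$ vanishes, so the coefficient of $\vol_n$ in $\tau(h)$ is zero for every $h \in E_n$. Extracting the $\vol_n$-coefficient from the derivation identity $\tau(fg) = \tau(f)g + f\tau(g)$ then gives $0 = \langle \tau(f), g\rangle + \langle f, \tau(g)\rangle$, completing the proof. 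As a bonus, since $\vol_n$ is $\symm_n$-invariant, the resulting perfect pairing is $\symm_n$-equivariant and pairs $H^0(M,\OOO)_{i,j}$ with $H^1(M,\OOO)_{n-i,n-j}$, consistent with the dimension equality implied by Theorems~\ref{h0-module-structure} and~\ref{h1-module-structure}.
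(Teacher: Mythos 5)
Your proof is correct and follows the same architecture as the paper's: perfectness of $\langle - , - \rangle$ on all of $E_n$, an adjointness identity for $\tau$ so that the pairing descends to $\ker \tau \otimes \mathrm{coker}\, \tau = H^0(M,\OOO) \otimes H^1(M,\OOO)$, and a dimension count to see that the descended pairing is still perfect. Two differences are worth recording. First, the paper asserts that $\tau$ is \emph{self}-adjoint, $\langle \tau(f), g \rangle = \langle f, \tau(g) \rangle$, ``checked on monomials''; your skew-adjoint version $\langle \tau(f), g \rangle = -\langle f, \tau(g) \rangle$ carries the correct sign (already for $n=1$ with $f = g = \theta_1$ one gets $\langle \alpha_1, \theta_1 \rangle = 1$ while $\langle \theta_1, \alpha_1 \rangle = -1$), and your derivation argument --- $\tau$ is an even derivation and the bidegree $(n-1,n+1)$ component of $E_n$ vanishes, so extracting the $\vol_n$-coefficient of $\tau(fg)$ gives $0 = \langle \tau(f), g\rangle + \langle f, \tau(g)\rangle$ --- is cleaner than a monomial check. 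The sign is immaterial for the descent, so the paper's conclusion is unaffected. Second, the paper closes by invoking the equality $\dim H^0(M,\OOO)_{i,j} = \dim H^1(M,\OOO)_{n-i,n-j}$ coming from Theorems~\ref{h0-module-structure} and~\ref{h1-module-structure}, whereas your identification $\mathrm{im}\,\tau = (\ker \tau)^{\perp}$ via rank--nullity is self-contained, needing only nondegeneracy of the pairing on $E_n$; either route is valid.
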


An explicit set of coset representatives for elements in $H^1(M,\OOO)$ can be obtained, if desired,
from Theorem~\ref{perfect-pairing}.

\begin{proof}
The pairing $\langle - , - \rangle$ is easily seen to be perfect as a map
\begin{equation}
E_n \otimes  E_n \longrightarrow \CC.
\end{equation}
We claim that the operator $\tau$ is self-adjoint with respect to this pairing; that is
\begin{equation}
\label{tau-self-adjoint}
\langle \tau(f), g \rangle = \langle f, \tau(g) \rangle \quad \quad \text{for all $f, g \in E_n$.}
\end{equation}
Indeed, it suffices to check Equation~\eqref{tau-self-adjoint} in the case where $f$ and $g$ are monomials, 
and this  is a straightforward computation.

Propositions~\ref{h0-is-bigraded} and \ref{h1-alternative-characterization} together with 
Equation~\eqref{tau-self-adjoint} imply that $\langle - , - \rangle$ descends to a well-defined bilinear pairing
\begin{equation}
\langle - , - \rangle: H^0(M,\OOO) \otimes H^1(M,\OOO) \longrightarrow \CC.
\end{equation}
Indeed, if $f \in H^0(M,\OOO) = \mathrm{Ker}(\tau)$ and $g \in E_n$ then
$\langle f, \tau(g) \rangle  = \langle \tau(f), g \rangle = \langle 0, g \rangle = 0$.
Since $\dim H^0(M,\OOO)_{i,j} = \dim H^1(M,\OOO)_{n-i,n-j}$, the pairing $\langle - , - \rangle$ remains perfect when regarded as a  map 
$H^0(M,\OOO) \otimes H^1(M,\OOO) \rightarrow \CC.$
\end{proof}

\begin{remark}
The proofs in this section made heavy use of the operator $\tau = \sum_{i = 1}^n \alpha_i \cdot (\partial/\partial \theta_i)$ on the exterior algebra $E_n$.
This operator comes from an action of the Lie algebra $\sell_2(\CC)$ on $E_n$.
Let  $\sigma: E_n \rightarrow E_n$ be the linear operator
\begin{equation}
\sigma(f) := \sum_{i = 1}^n \theta_i \cdot (\partial/\partial \alpha_i) f
\end{equation}
and let $\eta: E_n \rightarrow E_n$ be the linear operator which acts on the subspace $(E_n)_{i,j}$ by the scalar $i - j$.
It can be shown that
\begin{equation}
\label{sl2-relations}
[\tau, \sigma] = \eta \quad \quad 
[\eta, \tau] = 2 \cdot \tau \quad \quad
[\eta, \sigma] = -2 \cdot \sigma
\end{equation}
as operators on $E_n$. Indeed, the relation $[\tau, \sigma] = \eta$ is easily checked on monomials while the relations
$[\eta, \tau] = 2 \cdot \tau$ and 
$[\eta, \sigma] = -2 \cdot \sigma$ follow from bidegree considerations.
Since \eqref{sl2-relations} are the defining relations of  $\sell_2(\CC)$, we see that the action of $\tau, \sigma,$ and $\eta$ endows $E_n$ 
with the structure of an $\sell_2(\CC)$-module.

Similarly, it can be seen that the Lefschetz element $\ell = \sum_i \alpha_i \theta_i$ of bidegree $(1,1)$ appearing in Theorem~\ref{lefschetz-theorem}
fits into an $\sell_2(\CC)$-action with the operator $\sum_i (\partial/\partial \theta_i) (\partial/\partial \alpha_i)$ of bidegree $(-1,-1)$.
Here the Cartan generator of $\sell_2(\CC)$ acts on $(E_n)_{i,j}$ by the scalar $i+j-n$.
\end{remark}

\section{Generators and relations for $H^0(M,\OOO)$}
\label{Ring}

In the last section, we studied the $T$-invariant subalgebra
 $H^0(M,\OOO) \subseteq E_n$ as a doubly-graded $\symm_n$-module. In this section, we focus on its ring structure and give  a simple set of
 generators and combinatorial relations for $H^0(M,\OOO)$.
 
 It can be easily seen that the elements $\alpha_i$, $\alpha_i\theta_i$, and $\alpha_i\theta_j + \alpha_j\theta_i$ are all translation invariant (we will informally refer to these as ``basic invariants"), as are any products of these elements. But there are many dependence relations among products of these elements, and it is not immediately obvious how to pick out a basis from said products. This section will explain how to do so thereby giving a combinatorial basis for the ring of translation invariants.

We will use labelled matchings to index our basis. A {\em matching of size $n$} consists of a collection of pairwise disjoint size-two subsets of $\{1, \dots, n\}$. For our purposes we do not require that this collection partition $\{1, \dots, n\}$; there may be unmatched elements. 
To visualize matchings, we will depict them as $n$ vertices labelled $1$ through $n$, placed on a line, with an arc connecting matched vertices, e.g. when $n=8$,

\begin{figure}[h]
\begin{center}
\begin{tikzpicture}
\draw (0,0) node{1};
\fill[black] (0,.3) circle (.07 cm);
\draw (1,0) node{2};
\fill[black] (1,.3) circle (.07 cm);
\draw (2,0) node{3};
\fill[black] (2,.3) circle (.07 cm);
\draw (3,0) node{4};
\fill[black] (3,.3) circle (.07 cm);
\draw (4,0) node{5};
\fill[black] (4,.3) circle (.07 cm);
\draw (5,0) node{6};
\fill[black] (5,.3) circle (.07 cm);
\draw (6,0) node{7};
\fill[black] (6,.3) circle (.07 cm);
\draw (7,0) node{8};
\fill[black] (7,.3) circle (.07 cm);
\draw (3,.3) to[out = 45, in = 135] (5,.3);
\draw (4,.3) to[out = 45, in = 135] (6,.3);
\draw (2,.3) to[out = 45, in = 135] (7,.3);
\end{tikzpicture}
\end{center}
\end{figure}

Let $\Phi(n)$ consist of all matchings of size $n$ in which each 
unmatched element is either labelled $\alpha$, $\alpha\theta$ or unlabelled. 
To each matching $m \in \Phi(n)$ we  associate  $F_m \in H^0(M,\OOO)$ as follows.
For each unmatched vertex $i$ with an $\alpha$ label, take the product of the $\alpha_i$'s in increasing numerical order. 
Then, multiply by $\alpha_i\theta_i$ for each vertex $i$ labelled $\alpha\theta$. Finally, multiply by $\alpha_i\theta_j + \alpha_j\theta_i$ for each matched pair $\{i,j\}$. 
Note that these terms are degree $2$ and therefore commute, so there is no ambiguity in the order in which they are multiplied.
For example, if $m \in \Phi(8)$ is the labelled matching shown below 
\begin{figure}[h]
\begin{center}
\begin{tikzpicture}
\draw (0,0) node{1};
\fill[black] (0,.3) circle (.07 cm);
\draw (1,0) node{2};
\fill[black] (1,.3) circle (.07 cm);
\draw (2,0) node{3};
\fill[black] (2,.3) circle (.07 cm);
\draw (3,0) node{4};
\fill[black] (3,.3) circle (.07 cm);
\draw (4,0) node{5};
\fill[black] (4,.3) circle (.07 cm);
\draw (5,0) node{6};
\fill[black] (5,.3) circle (.07 cm);
\draw (6,0) node{7};
\fill[black] (6,.3) circle (.07 cm);
\draw (7,0) node{8};
\fill[black] (7,.3) circle (.07 cm);
\draw (3,.3) to[out = 45, in = 135] (5,.3);
\draw (4,.3) to[out = 45, in = 135] (6,.3);
\draw (0,.6) node{$\alpha$};
\draw (1,.6) node{$\alpha\theta$};
\end{tikzpicture}
\end{center}
\end{figure}
\noindent
then $F_m \in H^0(M,\OOO)$ is given by
$\alpha_1 \cdot \alpha_2\theta_2 \cdot (\alpha_4\theta_6 + \alpha_6\theta_4)(\alpha_5\theta_7 + \alpha_7\theta_5).$

The benefit of visualizing products of basic invariants as matchings lies in  
an easier description of the linear dependence relations among these products,
as in the following lemmas. The first is the Ptolemy relation (up to sign).

\begin{lemma}
\label{uncross}
Let $m \in \Phi(n)$ and suppose that $m$ contains arcs $(i,k)$ and $(j,l)$ with $i<j<k<l$. Let $m_0$ and $m_1$ be the 
matchings obtained by replacing arcs $(i,k)$ and $(j,l)$ with arcs $(i,j)$ and $(k,l)$ or with arcs $(i,l)$ and $(j,k)$ respectively. Then
\[
F_m+F_{m_0} + F_{m_1} = 0.
\]
Pictorially:
\begin{center}
\begin{tikzpicture}
\fill[black] (3,.3) circle (.07 cm);
\fill[black] (4,.3) circle (.07 cm);
\fill[black] (5,.3) circle (.07 cm);
\fill[black] (6,.3) circle (.07 cm);
\draw (3,.3) to[out = 45, in = 135] (5,.3);
\draw (4,.3) to[out = 45, in = 135] (6,.3);
\end{tikzpicture}
\quad $\mathrm{+}$ \quad
\begin{tikzpicture}
\fill[black] (3,.3) circle (.07 cm);
\fill[black] (4,.3) circle (.07 cm);
\fill[black] (5,.3) circle (.07 cm);
\fill[black] (6,.3) circle (.07 cm);
\draw (3,.3) to[out = 45, in = 135] (4,.3);
\draw (5,.3) to[out = 45, in = 135] (6,.3);
\end{tikzpicture}\quad $\mathrm{+}$ \quad
\begin{tikzpicture}
\fill[black] (3,.3) circle (.07 cm);
\fill[black] (4,.3) circle (.07 cm);
\fill[black] (5,.3) circle (.07 cm);
\fill[black] (6,.3) circle (.07 cm);
\draw (3,.3) to[out = 45, in = 135] (6,.3);
\draw (4,.3) to[out = 45, in = 135] (5,.3);
\end{tikzpicture}\quad $= 0$.
\end{center}
\end{lemma}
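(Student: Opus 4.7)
The plan is to reduce the identity to a purely local four-element skein identity by factoring out everything that the three matchings have in common, and then verify the local identity by direct expansion in $E_n$. The key observation enabling the factorization is that every arc contribution $\alpha_a\theta_b + \alpha_b\theta_a$ and every $\alpha\theta$-labeled vertex contribution $\alpha_c\theta_c$ has degree $2$ in $E_n$, hence is even and central. The matchings $m, m_0, m_1$ agree on all unmatched vertices and on all arcs not involving $\{i,j,k,l\}$, so I would bundle the product of all unmatched-vertex contributions (in the prescribed order) together with the product over the common arcs into a single element $P \in E_n$, and use centrality of the arc factors to write
\[
F_m = P\cdot(\alpha_i\theta_k+\alpha_k\theta_i)(\alpha_j\theta_l+\alpha_l\theta_j),
\]
and analogously $F_{m_0}=P\cdot(\alpha_i\theta_j+\alpha_j\theta_i)(\alpha_k\theta_l+\alpha_l\theta_k)$ and $F_{m_1}=P\cdot(\alpha_i\theta_l+\alpha_l\theta_i)(\alpha_j\theta_k+\alpha_k\theta_j)$, with no sign ambiguity because every factor pulled across $P$ is of even degree.

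It therefore suffices to prove the four-variable skein identity
\[
(\alpha_i\theta_k+\alpha_k\theta_i)(\alpha_j\theta_l+\alpha_l\theta_j)+(\alpha_i\theta_j+\alpha_j\theta_i)(\alpha_k\theta_l+\alpha_l\theta_k)+(\alpha_i\theta_l+\alpha_l\theta_i)(\alpha_j\theta_k+\alpha_k\theta_j)=0
\]
inside $E_n$. To check it, I would expand each of the three products into four monomials of the form $\alpha_a\theta_b\alpha_c\theta_d$, rewrite each in the canonical basis form $\alpha_{a'}\alpha_{c'}\theta_{b'}\theta_{d'}$ with $a'<c'$ and $b'<d'$ (picking up one sign from moving a single $\theta$ past a single $\alpha$), and sort by basis monomial. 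The $12$ expanded terms distribute over exactly $6$ basis monomials, one for each two-element subset of $\{i,j,k,l\}$ designating the $\alpha$-indices, and each basis monomial receives three nonzero contributions (one per product in the identity) whose signs sum to zero. The main potential obstacle is the sign bookkeeping in this expansion, but the symmetry of the summands $\alpha_a\theta_b+\alpha_b\theta_a$ forces the cancellations automatically, so the verification is routine rather than subtle.
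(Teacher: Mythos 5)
Your proof is correct and is essentially the ``direct computation'' that the paper leaves to the reader: factor out the common part $P$ using the centrality of the degree-$2$ arc and $\alpha\theta$ factors, then verify the local identity $(\alpha_i\theta_k+\alpha_k\theta_i)(\alpha_j\theta_l+\alpha_l\theta_j)+(\alpha_i\theta_j+\alpha_j\theta_i)(\alpha_k\theta_l+\alpha_l\theta_k)+(\alpha_i\theta_l+\alpha_l\theta_i)(\alpha_j\theta_k+\alpha_k\theta_j)=0$ by expansion, which does hold. One small slip in your description of the bookkeeping: each of the three products expands into $4$ of the $6$ basis monomials $\alpha_a\alpha_c\theta_b\theta_d$, so each basis monomial receives exactly \emph{two} contributions (of opposite sign, with one product omitting it entirely), not three --- indeed three terms of coefficient $\pm 1$ could never sum to zero.
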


The second describes a relationship between $\alpha$-labelled vertices and arcs.

\begin{lemma}
\label{movealpha}
Let $m \in \Phi(n)$ and suppose that $d$ contains the arc $(i,k)$ and vertex $j$ is labelled by $\alpha$ with $i<j<k$. 
Let $m_0$ and $m_1$ be the diagrams obtained by replacing arc $(i,k)$ with arc $(i,j)$ and vertex $k$ labelled by $\alpha$ 
or with arc $(j,k)$ and vertex $i$ labelled by $\alpha$ respectively. Then
\[
F_m+F_{m_0} + F_{m_1} = 0.
\]
Pictorially:
\begin{center}
\begin{tikzpicture}
\fill[black] (3,.3) circle (.07 cm);
\fill[black] (4,.3) circle (.07 cm);
\fill[black] (5,.3) circle (.07 cm);
\draw (3,.3) to[out = 45, in = 135] (5,.3);
\draw (4,.6) node{$\alpha$};
\end{tikzpicture}
\quad $\mathrm{+}$ \quad
\begin{tikzpicture}
\fill[black] (3,.3) circle (.07 cm);
\fill[black] (4,.3) circle (.07 cm);
\fill[black] (5,.3) circle (.07 cm);
\draw (3,.3) to[out = 45, in = 135] (4,.3);
\draw (5,.6) node{$\alpha$};
\end{tikzpicture}\quad $\mathrm{+}$ \quad
\begin{tikzpicture}
\fill[black] (3,.3) circle (.07 cm);
\fill[black] (4,.3) circle (.07 cm);
\fill[black] (5,.3) circle (.07 cm);
\draw (4,.3) to[out = 45, in = 135] (5,.3);
\draw (3,.6) node{$\alpha$};
\end{tikzpicture}\quad $= 0$.
\end{center}
\end{lemma}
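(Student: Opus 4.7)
The plan is to reduce the identity to a purely local calculation on the three indices $i < j < k$. The three matchings $m$, $m_0$, $m_1$ agree on every feature outside $\{i,j,k\}$: they contain the same arcs not touching these vertices, the same $\alpha\theta$-labels, and the same set $A$ of $\alpha$-labelled vertices in $\{1,\ldots,n\}\setminus\{i,j,k\}$. Only the local configuration on $\{i,j,k\}$ varies, with the single $\alpha$-label appearing at position $j$, $k$, or $i$ respectively.

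First I would pull out the common contribution. Every arc invariant $\alpha_p\theta_q+\alpha_q\theta_p$ and every $\alpha\theta$-invariant $\alpha_p\theta_p$ has even total degree, hence lies in the centre of $E_n$. Collecting all such contributions from outside $\{i,j,k\}$ yields a central factor $C$ that is common to $F_m$, $F_{m_0}$, and $F_{m_1}$ and may be moved freely past any other factor. For the ordered $\alpha$-product, writing $\alpha_A=\prod_{v\in A}\alpha_v$ in increasing order, I would show that the contribution from the $\alpha$-labels of each matching equals $\alpha_A$ times the single odd factor $\alpha_j$, $\alpha_k$, or $\alpha_i$, up to a sign depending on the position at which the extra index is inserted into the sorted order.

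The heart of the argument is the local exterior-algebra identity
\[
\alpha_j(\alpha_i\theta_k+\alpha_k\theta_i)+\alpha_k(\alpha_i\theta_j+\alpha_j\theta_i)+\alpha_i(\alpha_j\theta_k+\alpha_k\theta_j)=0.
\]
I would verify this by expanding each summand and bringing it into the standard form $\alpha_p\alpha_q\theta_r$ with $p<q$; the six resulting terms group into three pairs $\alpha_i\alpha_j\theta_k$, $\alpha_i\alpha_k\theta_j$, and $\alpha_j\alpha_k\theta_i$, each appearing once with coefficient $+1$ and once with coefficient $-1$. Combining this local vanishing with the common factor $\alpha_A\cdot C$ from the reduction step yields $F_m+F_{m_0}+F_{m_1}=0$.

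The main potential obstacle is the sign bookkeeping in the reduction step. Because the extra odd factor is $\alpha_j$, $\alpha_k$, or $\alpha_i$ according to the matching, its insertion point in the sorted $\alpha$-product differs among $m$, $m_0$, $m_1$, producing signs of the form $(-1)^{|A\cap(s,\infty)|}$ for $s\in\{j,k,i\}$. Ensuring these signs align properly — so that the local identity above really implies the global identity — is where the calculation needs the most care.
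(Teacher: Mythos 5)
Your reduction to the local identity
\[
\alpha_j(\alpha_i\theta_k+\alpha_k\theta_i)+\alpha_k(\alpha_i\theta_j+\alpha_j\theta_i)+\alpha_i(\alpha_j\theta_k+\alpha_k\theta_j)=0
\]
is exactly the intended direct computation, and that identity is correct. The gap is the step you defer at the end: the three insertion signs do \emph{not} align in general. Writing $A$ for the set of $\alpha$-labelled vertices of $m$ other than $j$, your factorization gives $F_m=(-1)^{e_j}\,\alpha_A\,\alpha_j(\alpha_i\theta_k+\alpha_k\theta_i)\,C$, $F_{m_0}=(-1)^{e_k}\,\alpha_A\,\alpha_k(\alpha_i\theta_j+\alpha_j\theta_i)\,C$, and $F_{m_1}=(-1)^{e_i}\,\alpha_A\,\alpha_i(\alpha_j\theta_k+\alpha_k\theta_j)\,C$ with $e_s=\#\{a\in A:\,a>s\}$, and these exponents differ by $\#\{a\in A:\,i<a<j\}$ and $\#\{a\in A:\,j<a<k\}$. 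When either count is odd, the unsigned sum is not zero. Concretely, for $n=4$ take $m$ with arc $(1,4)$ and $\alpha$-labels on $2$ and $3$, so $i=1$, $j=3$, $k=4$: then $F_m=\alpha_1\alpha_2\alpha_3\theta_4+\alpha_2\alpha_3\alpha_4\theta_1$, $F_{m_0}=\alpha_1\alpha_2\alpha_4\theta_3-\alpha_2\alpha_3\alpha_4\theta_1$, $F_{m_1}=\alpha_1\alpha_2\alpha_3\theta_4+\alpha_1\alpha_2\alpha_4\theta_3$, and one computes $F_m+F_{m_0}+F_{m_1}=2F_{m_1}\neq 0$, whereas $F_m+F_{m_0}-F_{m_1}=0$.

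So the obstacle you flag at the end is not a mere bookkeeping nuisance: it is where the argument (and, taken literally, the stated identity) breaks down whenever some $\alpha$-labelled vertex other than $j$ lies strictly between $i$ and $k$ with the wrong parity. What your computation actually establishes, once the signs are tracked, is the signed relation $F_m+(-1)^{\#\{a\in A:\,j<a<k\}}F_{m_0}+(-1)^{\#\{a\in A:\,i<a<j\}}F_{m_1}=0$. That signed version is all that is used downstream, since Lemma~\ref{crossing-nesting-reduction} and Theorem~\ref{h0-presentation} only need $F_m$ to lie in the span of $F_{m_0}$ and $F_{m_1}$. To finish, either prove the signed statement (carrying the exponents $e_s$ through explicitly), or restrict to configurations where both interval counts are even and verify that the application in Lemma~\ref{crossing-nesting-reduction} can be arranged to remain in that case; as written, your proof stops exactly at the point where the claim needs qualification.
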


Lemmas~\ref{uncross} and \ref{movealpha} are both proven by direct computation; we leave this to the reader.
These lemmas allow us to reduce the linearly dependent set of translation invariants
\begin{equation}
\{F_m \,:\, m \in \Phi(n)\}
\end{equation}
to
 a linearly independent subset.
 Lemma~\ref{uncross} allows translation invariants corresponding to matchings to be written as a linear combination of translation invariants corresponding to matchings with fewer 
 crossings or fewer nestings. 
 Lemma~\ref{movealpha} allows translation invariants corresponding to matchings to be written as a linear combination of 
 translation invariants corresponding to matchings with fewer $\alpha$ labels under arcs. The next two lemmas make this idea explicit.

Let $m \in \Phi(m)$ and let $C(m)$ denote the number of crossings of $m$,
 i.e. the number of quadruples $i<j<k<\ell$ such that $\{i,k\}$ and $\{j,\ell\}$ are matched in $m$. 
 Similarly, let $A(m)$ denote the number of times an $\alpha$ label appears under an arc of $m$,
 i.e. the number of triples $i<j<k$ such that $j$ is unmatched and has an $\alpha$ label and $\{i,k\}$ is matched in $m$. 

\begin{lemma}
\label{crossing-nesting-reduction}
If $m \in \Phi(n)$, then $m$ satisfies one of the following
\begin{itemize}
\item $C(m) = A(m) = 0$
\item $C(m)=0$ and there exists a collection of matchings $m_i\in \Phi(n)$ with $A(m_i) < A(m)$ such that $F_m = \sum_{i} c_i F_{m_i}$ for some constants $c_i$.
\item There exists a collection of matchings $m_i\in \Phi(n)$ with $C(m_i) < C(m)$ with $F_m = \sum_{i} c_i F_{m_i}$ for some constants $c_i$.
\end{itemize}
\end{lemma}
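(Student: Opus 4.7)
The plan is to reduce via Lemmas~\ref{uncross} and \ref{movealpha} according to the cases. When $C(m) = A(m) = 0$, the first bullet holds and there is nothing to do. When $C(m) > 0$, I would pick any pair of crossing arcs $(i,k),(j,l)$ of $m$ (with $i<j<k<l$) and apply Lemma~\ref{uncross} to obtain $F_m = -F_{m_0} - F_{m_1}$, landing in the third bullet. When $C(m) = 0$ but $A(m) > 0$, I would choose an $\alpha$-labelled vertex enclosed by some arc together with the innermost arc enclosing it, and apply Lemma~\ref{movealpha}, landing in the second bullet.

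For the crossing reduction, the content is the inequalities $C(m_0) \le C(m) - 1$ and $C(m_1) \le C(m) - 1$. The internal crossing between $(i,k)$ and $(j,l)$ is eliminated in both resolutions (in $m_0$ the new arcs $(i,j),(k,l)$ are disjoint, and in $m_1$ the new arcs $(i,l),(j,k)$ are nested). It remains to check that no third arc $(a,b)$ of $m$ acquires extra crossings. I would handle this by a short case analysis: the points $i<j<k<l$ partition the line into five regions, and for each of the fifteen possibilities for the regions containing $a$ and $b$, one verifies directly that each replacement pair crosses $(a,b)$ at most as often as $(i,k),(j,l)$ does. Summing over all other arcs then yields the desired strict reduction.

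For the $\alpha$-reduction, fix an $\alpha$-labelled vertex $j$ that is enclosed by some arc (such a vertex exists since $A(m) > 0$). Since $C(m) = 0$, the arcs enclosing $j$ form a linearly nested chain, and I take the innermost such arc $(i,k)$. Minimality guarantees that every arc $(a,b)$ of $m$ nested strictly inside $(i,k)$ satisfies either $b < j$ or $a > j$; consequently each such $(a,b)$ is either nested inside $(i,j)$ or disjoint from $(i,j)$, and symmetrically with respect to $(j,k)$. Arcs outside $(i,k)$ (either disjoint from or enclosing $(i,k)$) behave identically toward the replacement arcs as they did toward $(i,k)$. Hence applying Lemma~\ref{movealpha} yields $F_m = -F_{m_0} - F_{m_1}$ with $C(m_0) = C(m_1) = 0$. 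A short accounting — noting that in $m$ the $\alpha$-vertex $j$ contributes $1 + N$ to $A(m)$ where $N$ is the number of arcs of $m$ enclosing $(i,k)$, while in $m_0$ the new $\alpha$-vertex $k$ contributes only $N$, and the remaining $\alpha$-vertices only lose contributions from arcs that enclose them in $m$ but not in $m_0$ — shows $A(m_0) \le A(m) - 1$, and symmetrically for $m_1$.

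The main obstacle is really the fifteen-case verification in the crossing step, which is mechanical but tedious. The key conceptual input beyond the two input lemmas is the laminar structure of non-crossing matchings: this is what makes it possible to always pick an innermost enclosing arc in the second step so that the two replacement arcs do not introduce any new crossings.
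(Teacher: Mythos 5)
Your proof is correct and follows essentially the same route as the paper's: resolve any crossing via Lemma~\ref{uncross} when $C(m)>0$, and when $C(m)=0$ but $A(m)>0$ apply Lemma~\ref{movealpha} to an $\alpha$-labelled vertex and the innermost (shortest) arc enclosing it. You are in fact more careful than the paper, which asserts without verification that these moves strictly decrease $C$ and $A$ respectively; your case analysis of third arcs and your accounting of the $A$-statistic supply exactly the checks the paper leaves implicit.
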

\begin{proof}
Firstly, suppose $C(m) \neq 0$. So there exists at least on crossing, and we can apply Lemma~\ref{uncross} to satisfy the third condition. 

Otherwise, suppose $C(m) = 0$ and $A(m)\neq 0$. Then there exists at least one $\alpha$ labelled vertex lying under an arc. Applying Lemma~\ref{movealpha} to such a vertex and the shortest arc it lies under  will allow us to express $F_m$ as a linear combination of $F_{m_1}$ and $F_{m_2}$ for two matchings $m_1$ and $m_2$ as defined in Lemma~\ref{movealpha}. Note that since $m$ has no crossings, and we chose the shortest arc, $m_1$ and $m_2$ will also have no crossings, and $A(m_1) < A(m)$ and $A(m_2) <A(m)$, so the second condition is satisfied. 

Otherwise, the first condition is satisfied.
\end{proof}

Let $NC(n) \subseteq \Phi(n)$ denote the set of noncrossing matchings in which no arc nests an $\alpha$ label. That is, we set
\begin{equation}
NC(n) := 
\{m \in \Phi(n) \,:\, C(m) = A(m) = 0\}. 
\end{equation}
Lemma~\ref{crossing-nesting-reduction} has the following corollary.

\begin{corollary}
\label{spanning-corollary}
For any $m_0 \in \Phi(n)$, the element $F_{m_0} \in H^0(M,\OOO)$ lies in the span of 
\begin{equation}
\{ F_m \,:\, m \in NC(n) \}.
\end{equation}
\end{corollary}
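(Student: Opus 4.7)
The plan is to derive Corollary~\ref{spanning-corollary} as an immediate consequence of Lemma~\ref{crossing-nesting-reduction} by a double induction on the pair $(C(m_0), A(m_0)) \in \ZZ_{\geq 0} \times \ZZ_{\geq 0}$, ordered lexicographically with priority on the crossing count. Since $(C, A)$ takes values in a well-ordered set, this recursion is guaranteed to terminate, and the terminal case is exactly the condition $C(m_0) = A(m_0) = 0$ defining $NC(n)$.

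For the base case, suppose $C(m_0) = A(m_0) = 0$. Then $m_0 \in NC(n)$, and $F_{m_0}$ is trivially in the span of $\{F_m : m \in NC(n)\}$.

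For the inductive step, assume the statement holds for every $m' \in \Phi(n)$ with $(C(m'), A(m'))$ strictly less than $(C(m_0), A(m_0))$ in lex order. Apply Lemma~\ref{crossing-nesting-reduction} to $m_0$. If $C(m_0) > 0$, the lemma gives an expression $F_{m_0} = \sum_i c_i F_{m_i}$ with each $C(m_i) < C(m_0)$, so each $(C(m_i), A(m_i))$ is lex-smaller than $(C(m_0), A(m_0))$ regardless of the value of $A(m_i)$. If instead $C(m_0) = 0$ and $A(m_0) > 0$, the lemma produces an expression $F_{m_0} = \sum_i c_i F_{m_i}$ in which each $m_i$ satisfies $C(m_i) = 0 = C(m_0)$ and $A(m_i) < A(m_0)$, so again $(C(m_i), A(m_i))$ is lex-smaller. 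In either case, the inductive hypothesis applied to each $m_i$ writes $F_{m_i}$ as a linear combination of elements of $\{F_m : m \in NC(n)\}$, and linearity completes the argument.

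The argument is essentially a bookkeeping exercise, so there is no serious obstacle; the one point that deserves care is confirming that the second bullet of Lemma~\ref{crossing-nesting-reduction} preserves the value $C = 0$ (so that the induction on $A$ does not reintroduce crossings), but this is built into the statement of the lemma itself. Consequently the proof reduces to invoking Lemma~\ref{crossing-nesting-reduction} inside a straightforward lexicographic induction.
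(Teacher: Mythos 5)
Your proof is correct and is exactly the argument the paper intends: the paper leaves the corollary as an immediate consequence of Lemma~\ref{crossing-nesting-reduction}, and your lexicographic induction on $(C(m),A(m))$ is the standard way to make that deduction explicit. No gaps; the one subtlety you flag (that the second bullet preserves $C=0$) is indeed the only point requiring care, and it is guaranteed by the lemma.
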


It will turn out that the set in Corollary~\ref{spanning-corollary} is a basis for $H^0(M,\OOO)$.
We will now show its linearly independence using a nested induction argument.
 Let $NC(n,k)$ denote the set of all $m \in NC(n)$ such that $F_m$ is of total degree $k$.

\begin{proposition}
\label{prop:independent}
The set $\{F_m \,:\, m \in NC(n,k)\}$ is linearly independent in $H^0(M,\OOO)$.
\end{proposition}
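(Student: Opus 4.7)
My plan is to establish linear independence by a leading-monomial argument: for each $m \in NC(n)$ I will identify a specific squarefree monomial $\mu(m)$ appearing in $F_m$ with coefficient $\pm 1$, show that the map $m \mapsto \mu(m)$ is injective on $NC(n)$, and then conclude via an upper-triangular argument. This approach in fact yields linear independence of the entire set $\{F_m : m \in NC(n)\}$, so the statement for each fixed total degree $k$ will follow as a special case.

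Fix the total order on the $2n$ variables by $\alpha_n > \theta_n > \alpha_{n-1} > \theta_{n-1} > \cdots > \alpha_1 > \theta_1$, and compare squarefree monomials by the largest variable in the symmetric difference of their supports. The factors $\alpha_i$, $\alpha_j\theta_j$, and $\alpha_k\theta_\ell + \alpha_\ell\theta_k$ comprising $F_m$ involve pairwise disjoint variable sets (a vertex of $m$ appears in at most one factor), so the leading monomial of $F_m$ is the product of the leading monomials of its factors. For the arc factor $\alpha_k\theta_\ell + \alpha_\ell\theta_k$ with $k < \ell$, the leading term is $\alpha_\ell\theta_k$ because $\alpha_\ell$ is the largest variable in play. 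Thus
\[
\mu(m) \;=\; \pm\,\alpha_A\,\theta_B,
\]
where $A$ is the union of the $\alpha$-labels, the $\alpha\theta$-labels, and the \emph{larger} endpoints of the arcs of $m$, and $B$ is the union of the $\alpha\theta$-labels and the \emph{smaller} endpoints.

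The crux is injectivity of $m \mapsto \mu(m)$ on $NC(n)$. From $(A,B)$ one reads off the $\alpha\theta$-labels as $A \cap B$ and the unlabeled vertices as the complement of $A \cup B$, so the task reduces to recovering the arcs and $\alpha$-labels from $L := A \setminus B$ ($\alpha$-labels together with larger endpoints) and $R := B \setminus A$ (smaller endpoints). I propose a greedy reconstruction: repeatedly set $s := \max R$, pair $s$ with the smallest $\ell \in L$ satisfying $\ell > s$, and remove both from $L, R$; the $L$-elements remaining at the end are the $\alpha$-labels. To see this greedy choice coincides with $m$, suppose $s$ were instead paired in $m$ with some $\ell' > \ell$. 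Then $\ell \in L$ lies strictly between $s$ and $\ell'$, and is therefore either (i) an $\alpha$-label of $m$---but then $\ell$ is nested under the arc $(s,\ell')$, contradicting the $\alpha$-not-nested hypothesis---or (ii) the larger endpoint of an arc $(r,\ell)$ with $r \in R$. In case (ii), $r < s$ (since $s = \max R$ and $r$ cannot equal $s$), so the arcs $(r,\ell)$ and $(s,\ell')$ satisfy $r < s < \ell < \ell'$ and cross, contradicting the noncrossing hypothesis. Hence $\ell' = \ell$, and induction on $|L| + |R|$ completes the injectivity argument.

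With $m \mapsto \mu(m)$ injective, an upper-triangular argument closes the proof. In any dependence $\sum_m c_m F_m = 0$, choose $m^*$ in the support for which $\mu(m^*)$ is maximal. Since $\mu(m)$ is by definition the largest monomial appearing in $F_m$, no $F_m$ with $\mu(m) < \mu(m^*)$ can contain $\mu(m^*)$; hence the coefficient of $\mu(m^*)$ in the dependence equals $\pm c_{m^*}$, forcing $c_{m^*} = 0$, a contradiction. I expect the injectivity step to be the main obstacle, since it uses both defining properties of $NC(n)$ (noncrossing and $\alpha$-not-nested) in an essential way---the two cases of the above dichotomy correspond precisely to these two hypotheses, so neither can be dropped.
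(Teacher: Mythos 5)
Your proof is correct, and it takes a genuinely different route from the paper's. The paper argues by a nested induction: downward on the total degree $k$ (base case $k=2n$) and upward on the smallest vertex of $m$ not labelled $\alpha$ or $\alpha\theta$, multiplying a putative dependence by $\alpha_\ell$ so that the surviving terms land in the already-independent degree-$(k{+}1)$ family. You instead give a direct triangularity argument: each $F_m$ has a well-defined leading monomial $\mu(m)=\pm\alpha_A\theta_B$ (with $A$ the larger arc-endpoints together with all labels, $B$ the smaller arc-endpoints together with the $\alpha\theta$-labels), and your greedy reconstruction of $m$ from $(A,B)$ uses the two defining conditions $C(m)=A(m)=0$ exactly once each, which is a satisfying confirmation that the hypotheses are sharp. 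I checked the two points where such arguments usually break and both are fine: your term order (lexicographic comparison of supports from the top variable down) is a total order under which the leading monomial of a product of factors with pairwise disjoint variable supports is the product of their leading monomials, so $\mu(m)$ really is the unique maximal monomial of $F_m$ and occurs with coefficient $\pm 1$; and in the greedy step the partner of $s=\max R$ is always present in the current $L$, since an arc's larger endpoint is removed only when that arc is processed. Your approach buys a more self-contained proof (no downward induction on degree, independence of all of $\{F_m : m\in NC(n)\}$ in one stroke), and your injection $m\mapsto(A,B)$ is a close cousin of the bijection in Proposition~\ref{prop:bijection}, so it could be packaged with the same count of pairs of subsets to reach Corollary~\ref{cor:basis} along a slightly different path. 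What it does not give, and the paper's argument also does not need, is any new information beyond independence; the two proofs are of comparable length, and the choice between them is a matter of taste.
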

\begin{proof}
For any labelled matching $m$, let $s(m)$ denote the index of the smallest vertex not labelled by $\alpha$ or $\alpha\theta$ in $m$, or $n+1$ if no such vertex exists.

This proof proceeds via two nested inductions, the first downwards on $k$ and the second on $s(m)$.

When  $k=2n$, there is a unique matching in $NC(n,2n)$. In this matching, everything is labelled by $\alpha\theta$. 

Now assume as an inductive hypothesis that for some $k<2n$ the set $\{F_m \,:\, m \in NC(n,k+1)\}$ is linearly independent, and suppose
\begin{equation}
\sum_{m\in NC(n,k)} c_m F_m = 0.
\end{equation}

For clarity, we begin with the simpler case where $s(m) = 1$; this is the base case of our induction on $s(m)$.
To show that for every matching where $s(m) = 1$ we have $c_m =0$, multiply the given linear dependence by $\alpha_1$. We have
\[
\sum_{m\in NC(n,k)} c_m \alpha_1 F_m = 0.
\]
For any matching $m$ with $s(m) >1$, vertex 1 is either labelled $\alpha$ or $\alpha\theta$. In either case, $\alpha_1F_m = 0$. Otherwise, if $s(m) = 1$, then $\alpha_1F_m = \pm F_{m'}$ where $m'$ is the matching obtained from $m$ by either
\begin{enumerate}
\item Adding an $\alpha$ label to vertex $1$ if vertex $1$ was not part of any arc.
\item Removing arc $(1,j)$, then adding an $\alpha\theta$ label to vertex $1$ and adding an $\alpha$ label to vertex $j$, if vertex $1$ was part of arc $(1,j)$. 
\end{enumerate}
This is because $\alpha_1(\alpha_1\theta_j + \alpha_j\theta_1) = \alpha_1 \alpha_j\theta_1$. An example of case $2$ is shown in the 
figure below.
\begin{center}
\begin{tikzpicture}
\draw (0,0) node{1};
\fill[black] (0,.3) circle (.07 cm);
\draw (1,0) node{2};
\fill[black] (1,.3) circle (.07 cm);
\draw (2,0) node{3};
\fill[black] (2,.3) circle (.07 cm);
\draw (3,0) node{4};
\fill[black] (3,.3) circle (.07 cm);
\draw (4,0) node{5};
\fill[black] (4,.3) circle (.07 cm);
\draw (5,0) node{6};
\fill[black] (5,.3) circle (.07 cm);
\draw (0,.3) to[out = 45, in = 135] (4,.3);
\draw (1,.3) to[out = 45, in = 135] (3,.3);
\draw (5,.6) node{$\alpha$};

\draw[->] (6,.3)--(7,.3);
\draw (8,0) node{1};
\fill[black] (8,.3) circle (.07 cm);
\draw (9,0) node{2};
\fill[black] (9,.3) circle (.07 cm);
\draw (10,0) node{3};
\fill[black] (10,.3) circle (.07 cm);
\draw (11,0) node{4};
\fill[black] (11,.3) circle (.07 cm);
\draw (12,0) node{5};
\fill[black] (12,.3) circle (.07 cm);
\draw (13,0) node{6};
\fill[black] (13,.3) circle (.07 cm);
\draw (9,.3) to[out = 45, in = 135] (11,.3);
\draw (8,.6) node{$\alpha\theta$};
\draw (12,.6) node{$\alpha$};
\draw (13,.6) node{$\alpha$};
\end{tikzpicture}
\end{center}
Note that in either case, $m'$ is in $NC(n,k+1)$, since neither vertex $1$ nor vertex $j$ can lie under any arc in $m'$. In case 2, if vertex $j$ did, that arc would have had to cross arc $(1,j)$ in $m$. By our first inductive hypothesis all such $m'$ are linearly independent, so $c_m = 0$ for any $m$ with $s(m) = 1$.

%\begin{figure}[h]
%\begin{center}
%\begin{tikzpicture}
%\draw (0,0) node{1};
%\fill[black] (0,.3) circle (.07 cm);
%\draw (1,0) node{2};
%\fill[black] (1,.3) circle (.07 cm);
%\draw (2,0) node{3};
%\fill[black] (2,.3) circle (.07 cm);
%\draw (3,0) node{4};
%\fill[black] (3,.3) circle (.07 cm);
%\draw (4,0) node{5};
%\fill[black] (4,.3) circle (.07 cm);
%\draw (5,0) node{6};
%\fill[black] (5,.3) circle (.07 cm);
%\draw (0,.3) to[out = 45, in = 135] (4,.3);
%\draw (1,.3) to[out = 45, in = 135] (3,.3);
%\draw (5,.6) node{$\alpha$};

%\draw[->] (6,.3)--(7,.3);
%\draw (8,0) node{1};
%\fill[black] (8,.3) circle (.07 cm);
%\draw (9,0) node{2};
%\fill[black] (9,.3) circle (.07 cm);
%\draw (10,0) node{3};
%\fill[black] (10,.3) circle (.07 cm);
%\draw (11,0) node{4};
%\fill[black] (11,.3) circle (.07 cm);
%\draw (12,0) node{5};
%\fill[black] (12,.3) circle (.07 cm);
%\draw (13,0) node{6};
%\fill[black] (13,.3) circle (.07 cm);
%\draw (9,.3) to[out = 45, in = 135] (11,.3);
%\draw (8,.6) node{$\alpha\theta$};
%\draw (12,.6) node{$\alpha$};
%\draw (13,.6) node{$\alpha$};
%\end{tikzpicture}
%\end{center}
%\caption{An example of muliplication by $\alpha_1$ on a matching with $s(m) = 1$. The left diagram is $m$ and the right is $m'$.}
%\label{li-induction-ex}
%\end{figure}

To complete the inductive step, assume as an inductive hypothesis that $c_m = 0$ for any $m$ with $s(m) < \ell$. Multiply the linear dependence by $\alpha_\ell$. We have
\[
\sum_{m\in NC(n,k)} c_m \alpha_\ell F_m = 0.
\]
For any $m$ with $s(m) > \ell$, $\alpha_\ell F_m = 0$. For any $m$ with $s(m) < \ell$, our inductive hypothesis assumed $c_m = 0$. The analogous argument to the $s(m)=1$ case therefore shows that $c_m =0$ for all matchings $m$ with $s(m) = \ell$.
Therefore by strong induction $c_m = 0$ for all $m$, and the set $\{F_m \,:\, m \in NC(n,k)\}$ is linearly independent.
\end{proof}

We now know that the set 
$\{F_m \,:\, m \in NC(n)\}$
is linearly independent and spans all products of basic invariants. To show that it is a basis for the entire translation invariant subring, we employ a dimension count. To count the size of our proposed basis, we will give a bijection.

\begin{proposition}
\label{prop:bijection}
$NC(n,k)$ is in bijection with ordered pairs $(A,B)$ of subsets of $\{1,\dots,n\}$, where $A$ has size $\lfloor k/2 \rfloor$ and $B$ has size $\lceil k/2 \rceil$. 
\end{proposition}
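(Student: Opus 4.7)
The plan is to construct an explicit bijection $\phi: NC(n,k) \to \binom{[n]}{\lfloor k/2 \rfloor} \times \binom{[n]}{\lceil k/2 \rceil}$ together with an explicit inverse $\psi$. Given $m \in NC(n,k)$ with $a$ arcs, $q$ vertices labeled $\alpha\theta$, and $p$ vertices labeled $\alpha$ (so $2a+p+2q=k$), I define $(A,B) = \phi(m)$ by three rules: each $\alpha\theta$-labeled vertex $v$ contributes $v \in A \cap B$; each arc $(i,j)$ with $i<j$ contributes $i \in A \setminus B$ and $j \in B \setminus A$; and after listing the $\alpha$-labeled positions as $v_1 < \cdots < v_p$, the smaller $\lceil p/2 \rceil$ of them go into $B \setminus A$ and the larger $\lfloor p/2 \rfloor$ into $A \setminus B$. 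A direct count gives $|A| = a + q + \lfloor p/2 \rfloor = \lfloor k/2 \rfloor$ and $|B| = a + q + \lceil p/2 \rceil = \lceil k/2 \rceil$.

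For the inverse, given $(A,B)$ of the prescribed sizes, I declare every $v \in A \cap B$ to be $\alpha\theta$-labeled and then scan $A \triangle B$ from left to right while maintaining a stack $S$: push $r$ onto $S$ if $r \in A \setminus B$; if $r \in B \setminus A$ and $S \neq \emptyset$, pop its top element $i$ and form the arc $(i, r)$; if $r \in B \setminus A$ and $S = \emptyset$, flag $r$. After the scan, all flagged vertices together with all entries remaining on $S$ are declared $\alpha$-labeled, producing $\psi(A,B) = m$. The last-in-first-out behavior of $S$ automatically makes the resulting arcs noncrossing. Moreover, no $\alpha$-label of $\psi(A,B)$ can lie under an arc: an enclosed flagged $B$-mark at $r$ would have seen $S$ nonempty at time $r$, contradicting the flag, and an enclosed unmatched $A$-mark at $r$ would have been popped when its enclosing arc's right endpoint was processed.

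To show that $\psi \circ \phi = \mathrm{id}$ and $\phi \circ \psi = \mathrm{id}$, the crucial ingredient is the ordering lemma: in any run of $\psi$, every unmatched $B$-mark sits at a strictly smaller position than every unmatched $A$-mark. Indeed, if an unmatched $A$-mark at $r_2$ were followed by an unmatched $B$-mark at $r_1 > r_2$, then at step $r_1$ the stack would be nonempty (its top no earlier than $r_2$), and popping would create an arc, contradicting the unmatched status of one of $r_1$ or $r_2$. The size constraints $|A| = \lfloor k/2 \rfloor$ and $|B| = \lceil k/2 \rceil$ force exactly $\lceil p/2 \rceil$ unmatched $B$-marks and $\lfloor p/2 \rfloor$ unmatched $A$-marks in $\psi(\phi(m))$, which combined with the ordering lemma pins them down as the smaller $\lceil p/2 \rceil$ and larger $\lfloor p/2 \rfloor$ $\alpha$-positions, matching $\phi$'s splitting rule exactly. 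I expect the main obstacle to be precisely the reconciliation of $\phi$'s global, parity-sensitive $\alpha$-label split with $\psi$'s local stack behavior; once the ordering lemma is established, this reconciliation and the remaining verifications are routine.
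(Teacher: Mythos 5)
Your bijection coincides with the paper's: the same placement of $\alpha\theta$-labels in $A \cap B$, left and right arc endpoints in $A$ and $B$ respectively, and the leftmost $\lceil p/2 \rceil$ of the $\alpha$-labels in $B$ with the remaining $\lfloor p/2 \rfloor$ in $A$, so this is essentially the paper's proof. The only difference is presentational: you realize the inverse via an explicit stack algorithm (LIFO giving noncrossingness, your ordering lemma giving the paper's condition that unmatched $B$-elements precede unmatched $A$-elements), which is a correct and, if anything, more rigorous rendering of the paper's example-driven uniqueness argument.
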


\begin{proof}
Given any pair of subsets $A,B \subseteq [n]$ with $|A| = \lfloor k/2 \rfloor$ and $|B| = \lceil k/2 \rceil$, there is a unique $m \in NC(n,k)$ satisfying the following conditions.
\begin{enumerate}
\item The set of unmatched elements labelled $\alpha\theta$ is $A \cap B$.
\item The set of unmatched unlabelled elements is $\{1,\dots,n\} - (A \cup B)$.
\item The smaller element of any matched pair (i.e. the left endpoint of any arc) is in $A$.
\item The larger element of any matched pair (i.e. the right endpoint of an arc) is in $B$.
\item Every unmatched element in $B$ is to the left of every unmatched element of $A$.
\end{enumerate}

These conditions force every element $a$ of $A\setminus B$ to be matched with the smallest element $b$ of $B\setminus A$ for which the number of elements of $A$ between $a$ and $b$ matches the number of elements of $B$ between $a$ and $b$, if such an element exists, and otherwise be unmatched and labelled $\alpha$. To see this, consider an example where $n=8$, $A = \{1,2,4,5\}$ and $B = \{3,4,6,7,8\}$. Everything in $A\setminus B$ is either a left endpoint or labelled $\alpha$, and similarly for $B \setminus A$. We have the following picture:

\begin{figure}[h]
\begin{center}
\begin{tikzpicture}
\draw (0,0) node{1};
\fill[black] (0,.3) circle (.07 cm);
\draw (1,0) node{2};
\fill[black] (1,.3) circle (.07 cm);
\draw (2,0) node{3};
\fill[black] (2,.3) circle (.07 cm);
\draw (3,0) node{4};
\fill[black] (3,.3) circle (.07 cm);
\draw (4,0) node{5};
\fill[black] (4,.3) circle (.07 cm);
\draw (5,0) node{6};
\fill[black] (5,.3) circle (.07 cm);
\draw (6,0) node{7};
\fill[black] (6,.3) circle (.07 cm);
\draw (7,0) node{8};
\fill[black] (7,.3) circle (.07 cm);
\draw (0,.3)--(.2,.5);
\draw (1,.3)--(1.2,.5);
\draw (4,.3)--(4.2,.5);
\draw (2,.3)--(1.8,.5);
\draw (5,.3)--(4.8,.5);
\draw (6,.3)--(5.8,.5);
\draw (7,.3)--(6.8,.5);
\draw (3,.6) node {$\alpha\theta$};
\end{tikzpicture}
\end{center}
\end{figure}

We must connect left endpoints with right endpoints so that arcs do not cross, unmatched endpoints do not lie under arcs, and unmatched left endpoints do not lie to the left of any unmatched right endpoint. 

Consider which endpoint vertex 1 can be connected to. If it were connected to 3 or 6, then at least one of 2 or 5 would be left unmatched and under an arc. So it cannot connect to 3 or 6. Continuing this reasoning, we see it can only be connected to 7. Similarly, there is a unique option for every other endpoint, and the only matching is shown below.
%\vspace{.2in}
%\begin{tikzpicture}
%\draw (0,0) node{1};
%\fill[black] (0,.3) circle (.07 cm);
%\draw (1,0) node{2};
%\fill[black] (1,.3) circle (.07 cm);
%\draw (2,0) node{3};
%\fill[black] (2,.3) circle (.07 cm);
%\draw (3,0) node{4};
%\fill[black] (3,.3) circle (.07 cm);
%\draw (4,0) node{5};
%\fill[black] (4,.3) circle (.07 cm);
%\draw (5,0) node{6};
%\fill[black] (5,.3) circle (.07 cm);
%\draw (6,0) node{7};
%\fill[black] (6,.3) circle (.07 cm);
%\draw (7,0) node{8};
%\fill[black] (7,.3) circle (.07 cm);
%\draw (3,.6) node {$\alpha\theta$};

%\draw (1,.3) to[out = 45, in = 135] (2,.3);

%\draw (4,.3) to[out = 45, in = 135] (5,.3);
%\draw (0,.3)--(.2,.5);
%\draw (6,.3)--(5.8,.5);
%\draw (7,.3)--(6.8,.5);
%\end{tikzpicture}
\begin{figure}[h]
\begin{center}
\begin{tikzpicture}
\draw (0,0) node{1};
\fill[black] (0,.3) circle (.07 cm);
\draw (1,0) node{2};
\fill[black] (1,.3) circle (.07 cm);
\draw (2,0) node{3};
\fill[black] (2,.3) circle (.07 cm);
\draw (3,0) node{4};
\fill[black] (3,.3) circle (.07 cm);
\draw (4,0) node{5};
\fill[black] (4,.3) circle (.07 cm);
\draw (5,0) node{6};
\fill[black] (5,.3) circle (.07 cm);
\draw (6,0) node{7};
\fill[black] (6,.3) circle (.07 cm);
\draw (7,0) node{8};
\fill[black] (7,.3) circle (.07 cm);
\draw (3,.6) node {$\alpha\theta$};

\draw (1,.3) to[out = 45, in = 135] (2,.3);

\draw (4,.3) to[out = 45, in = 135] (5,.3);
\draw (0,.3) to[out = 45, in = 135] (6,.3);
\draw (7,.6) node{$\alpha$};
\end{tikzpicture}
\end{center}
\end{figure}

To invert this map, put every left endpoint in $A$, every right endpoint in $B$, and everything labelled $\alpha\theta$ in both. For vertices labelled $\alpha$, put the leftmost correct amount (so that $|B| = \lceil \frac{k}{2} \rceil$) in $B$ and the rightmost correct amount in $A$. This gives a bijection from $NC(n,k)$ to pairs of subsets of
 $\{1,\dots,n\}$ of the appropriate size. 
\end{proof}

Proposition~\ref{prop:bijection} shows that 
\begin{equation}
| NC(n) | = \sum_k |NC(n,k)| = \sum_k {n \choose \lfloor k/2 \rfloor} {n \choose \lceil k/2 \rceil} = {2n+1 \choose n}.
\end{equation}
On the other hand,
Theorem~\ref{h0-module-structure} implies that 
\begin{equation}
\dim H^0(M,\OOO) = \sum_{1 \leq i \leq j \leq n} \left( {n \choose i} {n \choose j} - {n \choose i-1} {n \choose j+1} \right) = {2n+1 \choose n}.
\end{equation}
Thanks to
Proposition~\ref{prop:independent}, we have the following corollary.

\begin{corollary}
\label{cor:basis}
The set $\{ F_m \,:\, m \in NC(n) \}$ is a basis of $H^0(M,\OOO)$.
\end{corollary}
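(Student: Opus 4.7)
The plan is to conclude the basis claim by combining the three ingredients already in place: linear independence (Proposition~\ref{prop:independent}), the cardinality $|NC(n)|$ (Proposition~\ref{prop:bijection}), and the dimension of $H^0(M,\OOO)$ (Theorem~\ref{h0-module-structure}). Since each $F_m$ is by construction a product of translation-invariant elements, we have $F_m \in H^0(M,\OOO)$, and the task reduces to showing that this collection is linearly independent and has the correct total cardinality.

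First I would upgrade Proposition~\ref{prop:independent} to a statement about all of $NC(n)$. Proposition~\ref{prop:independent} produces linear independence of $\{F_m : m \in NC(n,k)\}$ for each fixed total degree $k$, and since elements of different total degrees are automatically linearly independent in the graded ring $E_n$, the full set $\{F_m : m \in NC(n)\}$ is linearly independent in $H^0(M,\OOO)$.

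Next I would count the set and the ambient space. Proposition~\ref{prop:bijection} identifies $NC(n,k)$ with pairs $(A,B)$ of subsets of $\{1,\dots,n\}$ of sizes $\lfloor k/2\rfloor$ and $\lceil k/2\rceil$, so
\[
|NC(n)| = \sum_{k} \binom{n}{\lfloor k/2\rfloor}\binom{n}{\lceil k/2\rceil} = \sum_{j=0}^{n}\binom{n}{j}^2 + \sum_{j=0}^{n-1}\binom{n}{j}\binom{n}{j+1} = \binom{2n}{n} + \binom{2n}{n-1} = \binom{2n+1}{n},
\]
using Vandermonde and Pascal. On the other side, Theorem~\ref{h0-module-structure} together with the telescoping sum
\[
\dim H^0(M,\OOO) = \sum_{i \geq j}\!\left(\binom{n}{i}\binom{n}{j} - \binom{n}{i+1}\binom{n}{j-1}\right)
\]
also evaluates to $\binom{2n+1}{n}$. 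Matching dimensions, the linearly independent set $\{F_m : m \in NC(n)\}$ must span, and is therefore a basis.

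No serious obstacle is expected: all combinatorial and structural work has been done in the preceding results. The only mildly delicate point is making sure the linear independence of Proposition~\ref{prop:independent} across different total degrees assembles into linear independence on $H^0(M,\OOO)$, but this is immediate from the grading of $E_n$. As a byproduct, since the $F_m$ are products of basic invariants, we obtain that the basic invariants $\alpha_i$, $\alpha_i\theta_i$, and $\alpha_i\theta_j+\alpha_j\theta_i$ generate $H^0(M,\OOO)$ as a $\CC$-algebra.
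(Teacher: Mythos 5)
Your proposal is correct and follows essentially the same route as the paper: linear independence in each total degree from Proposition~\ref{prop:independent} (assembled across degrees via the grading), the count $|NC(n)| = \binom{2n+1}{n}$ from Proposition~\ref{prop:bijection}, and the matching dimension $\dim H^0(M,\OOO) = \binom{2n+1}{n}$ from Theorem~\ref{h0-module-structure}. The only difference is that you spell out the Vandermonde/Pascal evaluations, which the paper leaves implicit.
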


We can also give a simple combinatorial presentation for $H^0(M,\OOO)$ as a ring with generators and combinatorial relations.
Lemmas~\ref{uncross} and \ref{movealpha} give combinatorial relations among the basic invariants in $H^0(M,\OOO)$.  
The remaining relations among these invariants are straightforward.

\begin{theorem}
\label{h0-presentation}
The translation invariant ring $H^0(M,\OOO)$ is generated as a $\CC$-algebra by the basic invariants 
$\alpha_i, \alpha_i \theta_i$, and $\alpha_i \theta_j + \alpha_j \theta_i$
subject only to the combinatorial relations in Lemma~\ref{uncross} and Lemma~\ref{movealpha} together with 
\begin{equation}
\begin{cases}
\alpha_i^2 = 0 \\
(\alpha_i \theta_i)^2 = 0 \\
(\alpha_i \theta_i) (\alpha_i \theta_j + \alpha_j \theta_i) = 0 \\
\end{cases}  \quad \quad
\begin{cases}
(\alpha_i \theta_j + \alpha_j \theta_i)^2 = -2 (\alpha_i \theta_i) (\alpha_j \theta_j) \\
\alpha_i (\alpha_i \theta_j + \alpha_j \theta_i) = - \alpha_j \cdot (\alpha_i \theta_i) \\
(\alpha_i \theta_i) \cdot (\alpha_i \theta_j + \alpha_j \theta_i) = 0
\end{cases}
\end{equation}
\end{theorem}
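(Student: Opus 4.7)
The strategy is to build a surjection $\varphi: R \twoheadrightarrow H^0(M,\OOO)$ from the abstract $\CC$-algebra $R$ defined by the presentation, and then to prove it is an isomorphism via a dimension count. Well-definedness of $\varphi$ requires only that each listed relation hold among the basic invariants in $H^0(M,\OOO)$: the two combinatorial families are precisely Lemmas~\ref{uncross} and~\ref{movealpha}, while the monomial identities are routine direct calculations in the supercommutative exterior algebra $E_n$. Surjectivity is immediate from Corollary~\ref{cor:basis}, since each basis element $F_m$ is by construction a product of basic invariants.

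For injectivity, since $\dim H^0(M,\OOO) = |NC(n)|$ by Corollary~\ref{cor:basis}, it suffices to exhibit a spanning set of $R$ indexed by $NC(n)$. The plan is a two-stage reduction driving any monomial in the generators down to a $\CC$-linear combination of $F_m$ with $m \in NC(n)$. In the first stage I use only the monomial relations to convert an arbitrary word in the basic invariants into a scalar multiple of some $F_m$ with $m \in \Phi(n)$ (possibly zero). Each monomial identity is designed to handle a distinct type of index overlap between factors: repeated $\alpha$ or $\alpha\theta$ labels at the same vertex are annihilated by the two squaring relations; a repeated arc is flattened by $(\alpha_i\theta_j + \alpha_j\theta_i)^2 = -2(\alpha_i\theta_i)(\alpha_j\theta_j)$; a collision between an arc and an incident $\alpha\theta$ label is killed by $(\alpha_i\theta_i)(\alpha_i\theta_j+\alpha_j\theta_i)=0$; and a collision between an arc and an incident $\alpha$ label is rerouted by $\alpha_i(\alpha_i\theta_j+\alpha_j\theta_i)=-\alpha_j(\alpha_i\theta_i)$. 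An appropriate complexity measure — for instance the total number of index incidences beyond what a legal matching permits — then forces the rewriting to terminate on $\{F_m : m \in \Phi(n)\}$.

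In the second stage I invoke Lemma~\ref{crossing-nesting-reduction}: starting from $F_m$ with $m \in \Phi(n)$, either $m$ has a crossing and Lemma~\ref{uncross} rewrites $F_m$ in terms of matchings with strictly fewer crossings, or $m$ is noncrossing but has an $\alpha$ label nested under some arc, in which case Lemma~\ref{movealpha} applied to the shortest enclosing arc rewrites $F_m$ without introducing any crossing and while strictly decreasing the $\alpha$-under-arc count. Iteration terminates at elements of $NC(n)$. Combining the two stages shows $\{F_m : m \in NC(n)\}$ spans $R$, forcing $\dim R \leq |NC(n)| = \dim H^0(M,\OOO)$ and hence that $\varphi$ is an isomorphism. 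I expect the main obstacle to be stage one: systematically enumerating every overlap pattern among the basic invariants and verifying that the listed monomial relations (together with the implicit supercommutative structure) resolve each while strictly decreasing the chosen complexity measure. In particular, secondary identities such as $\alpha_i(\alpha_i\theta_i) = 0$ and the reduction of a product of two arcs sharing an endpoint have to be derived from the listed presentation rather than taken as axioms, and one has to confirm that no indefinite loop can arise during this normalization.
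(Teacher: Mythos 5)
Your strategy---realize the presented algebra $R$ mapping onto $H^0(M,\OOO)$, get surjectivity from Corollary~\ref{cor:basis}, and get injectivity by a two-stage rewriting (the monomial relations to land in $\{F_m : m \in \Phi(n)\}$, then Lemma~\ref{crossing-nesting-reduction} to land in $NC(n)$) followed by the dimension count $|NC(n)| = \dim H^0(M,\OOO)$---is exactly the argument the paper gives, only stated more carefully. But the worry you flag in your final sentences is not a routine verification: it is a genuine obstruction, and with the relations as listed your stage one cannot be completed. Two of the ``secondary identities'' you name are in fact \emph{not} consequences of the stated presentation. First, $\alpha_i \cdot (\alpha_i\theta_i) = 0$: in $R$ the generator $\alpha_i\theta_i$ is atomic, and already for $n=1$ the presented algebra $\wedge\{\alpha_1\} \otimes \CC[\alpha_1\theta_1]/\left((\alpha_1\theta_1)^2\right)$ is $4$-dimensional while $H^0(M,\OOO)$ is $3$-dimensional, with $\alpha_1\cdot(\alpha_1\theta_1)$ spanning the kernel. (Note that the relation $(\alpha_i\theta_i)(\alpha_i\theta_j+\alpha_j\theta_i)=0$ is listed twice; one occurrence is presumably a typo for the missing $\alpha_i(\alpha_i\theta_i)=0$.)

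Second, and more seriously, the product of two arcs sharing an endpoint. In $E_n$ one has
\begin{equation*}
(\alpha_i\theta_j+\alpha_j\theta_i)(\alpha_i\theta_k+\alpha_k\theta_i) = -(\alpha_i\theta_i)(\alpha_j\theta_k+\alpha_k\theta_j),
\end{equation*}
a \emph{nonzero} element of $H^0(M,\OOO)$, so stage one must rewrite this product; but the identity is independent of the listed relations. To see this, write $a_i, b_i, c_{ij}$ for the abstract generators and grade the free supercommutative algebra on them by $\ZZ^n$, assigning $a_i, b_i, c_{ij}$ the multidegrees $e_i, 2e_i, e_i+e_j$; every listed relation is homogeneous. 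In multidegree $2e_1+e_2+e_3$ the ambient space is spanned by $b_1c_{23},\, c_{12}c_{13},\, a_1a_3c_{12},\, a_1a_2c_{13},\, a_2a_3b_1$, while the only ideal generators that can contribute are $a_3(a_1c_{12}+a_2b_1)$, $a_2(a_1c_{13}+a_3b_1)$, and $a_1$ times the Lemma~\ref{movealpha} relation for $\{1,2,3\}$; these span a $2$-dimensional space involving no $c_{12}c_{13}$ or $b_1c_{23}$ terms. Hence $c_{12}c_{13}+b_1c_{23}$ is nonzero in $R$ but maps to $0$ in $H^0(M,\OOO)$. So the presentation must be augmented by $\alpha_i(\alpha_i\theta_i)=0$ and the shared-endpoint relation above (the paper's own proof asserts, without justification, that the braced relations reduce any product of basic invariants to a scalar multiple of some $F_{m_0}$; that assertion fails for exactly these two configurations). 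Once those relations are added, your two-stage reduction and dimension count do complete the proof, but you would still need to carry out the case analysis of overlap patterns explicitly, including checking that products of three or more generators sharing indices terminate under your complexity measure.
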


\begin{proof}
Corollary~\ref{cor:basis} shows that the basic invariants generate $H^0(M,\OOO)$ as a $\CC$-algebra, so we only need to show that any relation among products
of basic invariants can be deduced from the given relations.
The relations displayed in braces in the statement of the theorem may be used to show that any nonzero product of basic invariants is, up to a scalar, 
a product $F_{m_0}$ for some $m_0 \in \Phi(n)$.
Lemma~\ref{crossing-nesting-reduction} shows that, using only the relations of Lemmas~\ref{uncross} and \ref{movealpha},
the product $F_{m_0}$ can be written as a linear combination of $F_{m}$'s for various $m \in NC(n)$.
Corollary~\ref{cor:basis} says that $\{ F_m \:\, m \in NC(n) \}$ is a basis of $H^0(M,\OOO)$, so every relation among the basic invariants is given in the statement of the 
theorem.
\end{proof}

\section{Conclusion}
\label{Conclusion}

Throughout this paper, we assumed that the coefficient variables $\alpha_1, \alpha_2, \dots $ defining the translation $T$ were independent fermionic parameters.
However, in supergeometry one often considers more general fermionic translations
$T: E_n \rightarrow E_n$ of the form
$(\theta_1, \dots, \theta_n) \mapsto (\theta_1 + \beta_1, \dots, \theta_n + \beta_n)$ where the $\beta_i$ are elements of odd degree in $\alpha_1, \dots, \alpha_n$, i.e.
\begin{equation}
\beta_1, \dots, \beta_n \in \bigoplus_{i \geq 0} \wedge^{2i+1} \{\alpha_1, \dots, \alpha_n \}.
\end{equation}
In particular,
the elements $\beta_1, \dots, \beta_n$ may satisfy nontrivial relations.

Let $R \subseteq E_n$ be the subalgebra of $E_n$ which is invariant under the action of $T$. Since $T(\alpha_i) = \alpha_i$ for all $i$, the algebra $R$ has the structure
of a module over the exterior algebra $\wedge \{ \alpha_1, \dots, \alpha_n \}$.

\begin{problem}
\label{beta-problem}
Describe the structure of $R$ as a module over the free exterior algebra  $\wedge \{ \alpha_1, \dots, \alpha_n \}$.
\end{problem}

 Theorems~\ref{h0-module-structure} and \ref{h0-presentation} solve Problem~\ref{beta-problem} when the $\beta_i$ are independent.
 The general case is more difficult because relations among the $\beta_i$ can generate additional invariants in $R$ in ways that are difficult to predict.

\section{Acknowledgements}
\label{Acknowledgements}

B. Rhoades was partially supported by NSF Grant DMS-1953781.
The authors are grateful to Vera Serganova for suggesting that the action of $\ell$ on $E_n$ should come from a larger action of $\sell_2(\CC)$.

\end{document}